\title{
Extending Linear Convergence of the Proximal Point Algorithm: The Quasar-convex Case} 
\author{Jos\'e M. M. de Brito \thanks{School of Science, Great Bay University; Great Bay Institute for Advanced Study,
Dongguan 523000, Guangdong Province,  People's Republic of China; and Instituto Federal do Piau\'{\i}, S\~ao Raimundo Nonato, Piau\'{\i}, Brazil. E-mail: jose.brito@ifpi.edu.br. ORCID-ID: 0000-0003-4362-0536
}  
\and
Felipe Lara\thanks{Instituto de Alta investigaci\'on (IAI), Universidad de
Tarapac\'a, Arica, Chile. E-mail: felipelaraobreque@gmail.com; flarao@academicos.uta.cl. 
Web: felipelara.cl, ORCID-ID: 0000-0002-9965-0921}
\and
Di Liu\thanks{Instituto Nacional de Matem\'atica Pura e Aplicada (IMPA), Rio de Janeiro, Brazil. E-mail: di.liu@impa.br, ORCID-ID: 0000-0002-0146-0055}
}
\providecommand{\U}[1]{\protect \rule{.1in}{.1in}}
\newtheorem{theorem}{Theorem}
\newtheorem{corollary}[theorem]{Corollary}
\newtheorem{definition}[theorem]{Definition}
\newtheorem{example}[theorem]{Example}
\newtheorem{lemma}[theorem]{Lemma}
\newtheorem{proposition}[theorem]{Proposition}
\newtheorem{remark}[theorem]{Remark}
\pgfplotsset{compat=1.18}
\begin{document}


\maketitle

\begin{abstract}
\noindent This work investigates the properties of the proximity operator for quasar-convex functions and establishes the convergence of the proximal point algorithm to a global minimizer  with a particular focus on its convergence rate. In particular, we demonstrate: (i) the generated sequence is mi\-ni\-mi\-zing and achieves an $\mathcal{O}(\varepsilon^{-1})$ complexity rate for quasar-convex functions; (ii) under strong quasar-convexity, the sequence converges linearly and attains an $\mathcal{O}(\ln(\varepsilon^{-1}))$ complexity rate. These results extend known convergence rates from the (strongly) convex to the (strongly) quasar-convex setting. To the best of our knowledge, some findings are novel even for the special case of (strongly) star-convex functions. Numerical experiments corroborate our theoretical results. 

{\small}

\medskip

\noindent{\small \emph{Keywords}: Nonconvex optimization; Proximal point algorithms; Linear convergence; Quasar-convexity}

\medskip

\noindent {\bf Mathematics Subject Classification:} 90C26; 90C30.
\end{abstract}

\section{Introduction}

Let $h: \mathbb{R}^{n} \rightarrow \overline{\mathbb{R}} := \mathbb{R} \cup \{\pm \infty\}$ be a proper function. We are interested in solving the problem 
\begin{equation}\label{main:p}
 \min_{x \in \mathbb{R}^{n}} \, h(x),
\end{equation}
via the {\it Proximal Point Algorithm} (PPA henceforth) \cite{M1,M2,rock-1976}, a scheme which given $x^{0} \in \mathbb{R}^{n}$ and a sequence of positive parameters $\{\beta_{k}\}_{k}$, generates a sequence as follows:
\begin{equation}\label{ppa:intro}
 x^{k+1} \in {\rm argmin}_{x \in \mathbb{R}^{n}}\, \left( h(x) + \frac{1}{2 \beta_{k}} \lVert x^{k} - x\Vert^{2} \right), ~ \forall ~ k \in \mathbb{N}_{0} := \mathbb{N} \cup \{0\}.
\end{equation}

Despite its simplicity, PPA is a powerful tool in continuous optimization, as it allows us to simplify the original pro\-blem by solving better-conditioned subproblems. For instance, when \( h \) is convex, the function $x \mapsto h(x) + \frac{1}{2 \beta_{k}} \lVert x^{k} - x \rVert^{2}$ is strongly convex. This property makes solving the subproblems easier because proper, lower semicontinuous (lsc henceforth) strongly convex functions possess remarkable properties, such as a unique minimizer and quadratic growth. In fact, when \( h \) is convex, the sequence \(\{x^{k}\}_{k}\) generated by \eqref{ppa:intro} is a minimizing sequence for pro\-blem \eqref{main:p}. Specifically, $\{h(x^k)\}_k$ is decreasing, converges to the minimum (though not linearly), and achieves  a complexity of \(\mathcal{O}(\varepsilon^{-1})\). Moreover, if \( h \) is strongly convex, then \(\{x^{k}\}_{k}\) converges linearly to the unique solution with an improved complexity rate of \(\mathcal{O}(\ln(\varepsilon^{-1}))\). 
 
Due to these appealing properties, significant research efforts in recent de\-ca\-des have been devoted to extending the convergence guarantees of the PPA to broader function classes. Notable contributions include extensions to weakly convex and hypomonotone operators \cite{CP,IPS,PEN}; developments in DC pro\-gra\-mming \cite{AN,ABS,BB,SSC}; and studies of (strongly) quasiconvex functions \cite{GLM-1,Lara-9,PC}. However, these frameworks fail to fully preserve the desirable properties of the convex case. For instance, PPA achieves only sublinear convergence for weakly convex and DC functions. Although linear convergence is possible for strongly quasiconvex functions, there is no evidence that PPA even converges to a minimizer in the general quasiconvex (not strongly) case \cite{PC}.

This paper bridges this gap by presenting a comprehensive extension of PPA from convex to quasar-convex functions. We establish global convergence to a minimizer and provide a detailed analysis of its convergence rate. This extension is compelling for three key reasons:

First, the quasar-convex class is a strict generalization of fundamental convexity classes. It encompasses all convex functions with a nonempty solution set \cite{GGK,HMR}, all strongly convex functions as well as all (strongly) star-convex functions \cite{NP-2006}.

Second, quasar-convexity arises naturally in numerous nonconvex applications, including machine learning \cite{HMR,LV,LUW} and stochastic optimization \cite{Quantum,XZ}. Furthermore, its favorable geometry is also crucial for accelerating gradient-based me\-thods (see \cite{GGK,HLMV,HMR,HADR,HSS,WW} and references therein).

Third, as we demonstrate, quasar-convexity provides a natural modeling framework for novel problems in microeconomics, statistics, and machine learning that are beyond the reach of convex models (see Subsection \ref{subsec:3-1}).

\medskip

{\bf Our Contribution}: We study (strongly) quasar-convex functions with modulus $\gamma \geq 0$. To that end, first we establish sufficient conditions for (non\-smooth) functions to be (strongly) quasar-convex. In particular, we show that the $\ell_{p}$-regularization ($0<p<1$) \cite{Survey} and the CES (Constant Elasticity of Subs\-titution) \cite{ACMS}, Cobb-Douglas \cite{CD}, and Leontief \cite{Leon} production/utility functions are quasar-convex (and strongly quasar-convex on bounded convex sets). Second, we investigate the proximity operator of quasar-convex functions by proving that it is always nonempty (i.e., existence of iterates is gua\-ran\-teed) and demonstrating cases where it is not a singleton. Furthermore, we es\-ta\-blish a useful relationship between the minimizers of the proximity operator and the minimizers of the function. Third, we prove that the set of fixed points of the proximity operator coincides with the set of minimizers of a quasar-convex function, which provides the stopping criterion for PPA. Fourth, we implement PPA for quasar-convex functions ($\gamma \geq 0$) and show that the generated sequence is minimizing with  a complexity of $\mathcal{O}(\varepsilon^{-1})$ when $\gamma = 0$, matching the rate for convex functions \cite{Gu}. Fifth, for the special case of strongly quasar-convex functions ($\gamma > 0$), we prove that PPA converges linearly to the unique solution and achieves an $\mathcal{O}(\ln(\varepsilon^{-1}))$ complexity rate, exactly as in the strongly convex case. Finally, we present numerical experiments with nonconvex examples to demonstrate the potential of our results.

The paper is structured as follows: In Section \ref{sec:02}, we introduce preliminary concepts and basic definitions related to generalized convexity. In Section \ref{sec:03}, we study nonsmooth quasar-convex functions, analyze key properties of the pro\-xi\-mi\-ty operator, and present illustrative examples with applications in machine learning and economics. In Section \ref{sec:04}, we implement the proximal point algorithm (PPA) and analyze its convergence behavior and complexity rates for both strongly quasar-convex and quasar-convex cases. In Section \ref{sec:05}, we provide numerical experiments on nonconvex problems, demonstrating the practical implications of our theoretical results. Finally, in Section \ref{sec:06}, we conclude with a discussion of our findings and outline potential directions for future research.


\section{Preliminaries and Basic Definitions}\label{sec:02}

The inner product of $\mathbb{R}^{n}$ and the Euclidean norm are denoted by $\langle \cdot,\cdot \rangle$ and $\lVert \cdot \rVert$, respectively. The open ball with center at $x_{0} \in \mathbb{R}^{n}$ and radius $\varepsilon > 0$ is denoted by $\mathbb{B}(x_{0}, \varepsilon)$. We denote $\mathbb{R}_{+} := [0, + \infty[$ and $\mathbb{R}_{++} := \, ]0, + \infty[$, hence $\mathbb{R}^{n}_{+} := [0, + \infty[ \, \times \ldots \times [0, + \infty[$ and $\mathbb{R}_{++}^{n} := \, ]0, + \infty[ \, \times \ldots \times \, ]0, + \infty[$ ($n$ times), respectively. Let $K$ be a nonempty set in $\mathbb{R}^{n}$, its closure is denoted by $\overline{K}$ 
and its asymptotic (recession) cone by
$K^{\infty} := \{ u \in \mathbb{R}^{n}: ~ \exists ~ t_{k} \rightarrow + \infty, ~ \exists~ x^{k} \in K, ~ \frac{x^{k}}{t_{k}} \rightarrow u \}$. Furthermore, for any set $K \subset \mathbb{R}^{n}$, it follows from \cite[Proposition 2.1.2]{AT} that
$K^{\infty}=\{0\}$ if and only if $K$ is bounded.

Given any $x, y, z \in \mathbb{R}^{n}$ and any $\beta \in \mathbb{R}$, the following relations hold: 
\begin{align}
 & ~~~~~ \langle x - z, y - x \rangle= \frac{1}{2} \lVert z - y \rVert^{2} -
 \frac{1}{2} \lVert x - z \rVert^{2} - \frac{1}{2} \lVert y - x \rVert^{2}, 
 \label{3:points} \\
 & \lVert \beta x + (1-\beta) y \rVert^{2} = \beta \lVert x \rVert^{2} + (1 -
 \beta) \lVert y\rVert^{2} - \beta(1 - \beta) \lVert x - y \rVert^{2}. 
 \label{iden:1}
\end{align}

Given any extended-valued function $h: \mathbb{R}^{n} \rightarrow \overline{\mathbb{R}}$, the effective domain of $h$ is defined by ${\rm dom}\,h := \{x \in \mathbb{R}^{n}: h(x) < + \infty \}$. It is said that $h$ is proper if ${\rm dom}\,h$ is nonempty and $h(x) > - \infty$ for all $x \in \mathbb{R}^{n}$. The notion of properness is important when dealing with minimization pro\-blems.

We denote by ${\rm epi}\,h := \{(x,t) \in \mathbb{R}^{n} \times \mathbb{R}: h(x) \leq t\}$ the epigraph of $h$, by $S_{\lambda} (h) := \{x \in \mathbb{R}^{n}: h(x) \leq \lambda\}$ the sublevel set of $h$ at the height $\lambda \in \mathbb{R}$ and by ${\rm argmin}_{\mathbb{R}^{n}} h$ the set of all minimal points of $h$. A function $h$ is lsc at $\overline{x} \in \mathbb{R}^{n}$ if for any sequence $\{x_k\}_{k} \subset \mathbb{R}^{n}$ with $x_k \rightarrow \overline{x}$, we have $h(\overline{x}) \leq \liminf_{k \rightarrow + \infty} h(x_k)$. 

A function $h$ with convex domain is said to be
\begin{itemize}
 \item[$(a)$] (strongly) convex on ${\rm dom}\,h$, if there exists $\gamma \geq 0$ such that, for all $x, y \in {\rm dom}\,h$ and all $\lambda \in [0, 1]$, we have
 \begin{equation}\label{strong:convex}
  h(\lambda y + (1-\lambda)x) \leq \lambda h(y) + (1-\lambda) h(x) - \lambda (1 - \lambda) \frac{\gamma}{2} \lVert x - y \rVert^{2},
 \end{equation}

 \item[$(b)$] (strongly) quasiconvex on ${\rm dom}\,h$, if there exists $\gamma \geq 0$ such that, for all $x, y \in {\rm dom}\,h$ and all $\lambda \in [0, 1]$, we have
 \begin{equation}\label{strong:quasiconvex}
  h(\lambda y + (1-\lambda)x) \leq \max \{h(y), h(x)\} - \lambda(1 - \lambda) \frac{\gamma}{2} \lVert x - y \rVert^{2}.
 \end{equation}
\end{itemize} 
 \noindent A function is strongly convex (resp. quasiconvex) when $\gamma > 0$, and convex (resp. quasiconvex) when $\gamma = 0$. Hence, every (strongly) convex function is (strongly) quasiconvex, while the reverse statements do not hold in general (see \cite{CMA,HKS,Lara-9}).

Let \(K \subset \mathbb{R}^{n}\). A proper function \(h:\mathbb{R}^{n} \rightarrow \overline{\mathbb{R}}\) is said to be:
\begin{itemize}
 \item[$(i)$] 2-supercoercive on $K$, if
 \begin{equation}
  \liminf_{\substack{x\in K,\,\lVert x \rVert \rightarrow+ \infty}} \frac{h(x)}{\lVert x
  \rVert^{2}} >0,
 \end{equation}

 \item[$(ii)$] coercive on $K$, if
 \begin{equation}
  \lim_{\substack{x\in K, \, \lVert x \rVert \rightarrow+ \infty}} h(x) = + \infty.
 \end{equation}
 or equivalently, if $S_{\lambda} (h)=
 \{x\in K:\ h(x)\le \lambda\}$ is bounded for all \(\lambda \in \mathbb{R}\).
\end{itemize}

Clearly, $(i) \Rightarrow (ii)$, but the reverse statement does not hold as the function $h(x) = \lvert x \rvert$ shows. 

The following generalized convexity notion has been used in several applications from machine learning in virtue of its properties for fast convergence of  gradient-type methods (see \cite{GGK,HMR,HADR,HSS,WW} and references therein).

Motivated by \cite[Lemma 10]{HSS}, we define quasar-convexity as follows:
\begin{definition}\label{def:quasar}
Let $K \subset \mathbb{R}^{n}$ be a closed convex set, $\kappa \in \, ]0,1]$, $\gamma \geq 0$, and $h: \mathbb{R}^{n} \to \overline{\mathbb{R}}$ be a proper function such that $K \subset {\rm dom}\,h$. We say that $h$ is $(\kappa, \gamma)$-strongly quasar-convex on $K$ with respect to $\overline{x} \in {\rm argmin}_{K}\,h$ if
\begin{align}
 h(\lambda \overline{x}+(1-\lambda)x) \leq \kappa \lambda h(\overline{x})+(1-\kappa\lambda) h(x) - \lambda \left(1-\frac{\lambda}{2-\kappa}\right) \frac{\kappa \gamma}{2} \|x-\overline{x}\|^2,
\end{align}
for all $\lambda\in[0,1]$ and all $x\in K$.
Furthermore, when $\gamma=0$, we simply say that $h$ is $\kappa$-quasar-convex on $K$ with respect to $\overline{x}\in \operatorname{argmin}_{K} h$.
\end{definition}
Note that ${\rm argmin}_{K}\,h$ is a singleton when $h$ is strongly quasar-convex on $K$ ($\gamma > 0$), thus in this case we say that {\it $h$ is strongly quasar-convex with modulus $\kappa \in \, ]0, 1]$ and $\gamma > 0$}. If $h$ is quasar-convex ($\gamma = 0$), then ${\rm argmin}_{K}\,h$ is {\it star-shaped} (see \cite[Observation 3]{HSS}), i.e., there exists an $\overline{x}_{0} \in {\rm argmin}_{K}\,h$ such that for all $\overline{x} \in {\rm argmin}_{K}\,h$ and all $\lambda \in [0, 1]$, we have $\lambda \overline{x}_{0} + (1-\lambda)\overline{x} \in {\rm argmin}_{K}\,h$. Fur\-ther\-mo\-re, if $\kappa = 1$, then (strongly) quasar-convex functions are {\it (strongly) star-convex} \cite{NP-2006}. In particular, every (strongly) convex function is $(1, \gamma)$-strongly qua\-sar-convex ($\gamma \geq 0$).

The relationship between convex/star-convex/quasar-convex notions is su\-mma\-ri\-zed below:
 \begin{align}\label{scheme}
  \begin{array}{ccccccc}
  {\rm strongly ~ convex} & \overset{*}{\Longrightarrow} & {\rm strongly ~ star{\rm -}convex} & \overset{*}{\Longrightarrow} & {\rm strongly ~ quasar{\rm -}convex}  \notag \\
  \Downarrow & \, & \Downarrow & \, & \Downarrow  \notag \\
  {\rm convex} & \overset{*}{\Longrightarrow} & {\rm star{\rm -}convex} & \overset{*}{\Longrightarrow} & {\rm quasar{\rm -}convex},
  \end{array}
 \end{align}
 where ``$*$" denotes that ${\rm argmin}_{K\,}h \neq \emptyset$ is required. All the reverse statements do not hold in general (see \cite{GGK,HADR,HSS,NP-2006}).

\begin{remark}
 There is no relationship between (strongly) quasiconvex and (strong\-ly) quasar-convex functions. Indeed, the function in \cite[Example 9]{HLMV} is strong\-ly quasiconvex but not quasar-convex, while the function in Example \ref{exam:02} (below) is strongly quasar-convex but not quasiconvex. For differentiable func\-tions, sufficient conditions under which strong quasiconvexity implies quasar-convexity are provided in \cite{HLMV,LV}.
\end{remark}

Note that if $h$ is strongly quasar-convex on $K$ with modulus $\kappa \in \, ]0, 1]$ and $\gamma > 0$, then (see \cite[Corollary 1]{HSS})
\begin{equation}\label{qwc:sconvex}
 h(\overline{x}) + \frac{\kappa \gamma}{2(2 - \kappa)} \lVert y - \overline{x} \rVert^{2} \leq h(y), ~ \forall ~ y \in K,
\end{equation}
where $\overline{x} \in {\rm argmin}_{K}\,h$, thus strongly quasar-convex functions satisfy a quadratic growth condition with modulus $\frac{\kappa \gamma}{2(2 - \kappa)} > 0$. 

In the differentiable case, we have the following useful characterization: A differentiable function $h: \mathbb{R}^{n} \rightarrow \mathbb{R}$ is $(\kappa, \gamma)$-strongly quasar-convex ($\kappa \in \, ]0, 1]$ and $\gamma \geq 0$) with respect to $\overline{x} \in {\rm argmin}_{\mathbb{R}^{n}}\,h$ if and only if (see \cite[Lemma 10]{HSS})
\begin{equation}\label{diff:squasar}
 h(\overline{x}) \geq h(y) + \frac{1}{\kappa} \langle \nabla h(y), \overline{x} - y \rangle + \frac{\gamma}{2} \lVert y - \overline{x} \rVert^{2}, ~ \forall ~ y \in \mathbb{R}^{n}.
\end{equation}
 Hence, quasi-strongly convex functions \cite{NNG} are related to strongly quasar-convex.
 Further properties for differentiable (strongly) quasar-convex functions may be found in \cite{GGK,HMR,HADR,HSS,WW} among others.

Given a nonempty, closed and convex set $K \subset \mathbb{R}^{n}$, the proximity operator on $K$ of parameter $\beta > 0$ of a proper lsc  function $h: \mathbb{R}^{n} \rightarrow \overline{\mathbb{R}}$ at $z \in \mathbb{R}^{n}$ is defined as the operator ${\rm Prox}_{\beta h} (K, \cdot): \mathbb{R}^{n} \rightrightarrows \mathbb{R}^{n}$ for which
\begin{equation}\label{prox:operator}
 {\rm Prox}_{\beta h} (K, z) = \mathrm{argmin}_{x \in K} \left\{ h(x) + \frac{1}{2 \beta} \Vert z - x \rVert^{2} \right\}.
\end{equation}
When $K = \mathbb{R}^{n}$, we simply write ${\rm Prox}_{\beta h} (z) := {\rm Prox}_{\beta h} (\mathbb{R}^{n}, z)$. If $h$ is proper, lsc and convex, then ${\rm Prox}_{\beta h}$ turns out to be a single-valued ope\-ra\-tor (see, for instance, \cite[Proposition
12.15]{BC-2}).

For a further study on generalized convexity and proximal point type me\-thods we refer to \cite{ADSZ,BC-2,CMA,GL2,GL1,GLM-Survey,HKS,KL-2,Lara-9,LMV,P} and references therein.

\section{Nonsmooth Quasar-Convex Functions}\label{sec:03}

While quasar-convex functions are well-understood in the differentiable setting (see \cite{GGK,HMR,HADR,HSS,WW}), their nonsmooth counterparts remain largely unexplored.
This section addresses this gap by presenting key examples of nonsmooth quasar-convex functions and analyzing their properties, particularly their relationship with the proximity operator.

\subsection{Properties and Examples}\label{subsec:3-1}

 We begin with two auxiliary lemmas that will be used to construct explicit examples of homogeneous (strongly) quasar-convex functions. The first analyzes auxiliary functions while the second uses them to characterize quasar-convexity and strong quasar-convexity for homogeneous functions. These results will be used in the sequel.

\begin{lemma}\label{lemma:theta}
Let $K \subset \mathbb{R}^n$ be a nonempty closed convex set with $0 \in K$, and let $\alpha > 0$. Assume that $\kappa \in ~ ]0, \alpha[$ if $\alpha \leq 1$, and $\kappa \in ~ ]0,1]$ if $\alpha > 1$. Consider the func\-tions $\theta_\alpha: \, ]0,1] \to \mathbb{R}$ and $Q_{\alpha,\kappa}: \, ]0,1] \to \mathbb{R}$ defined by
 \begin{equation}\label{thetaQ}
  \theta_\alpha(\lambda) := \frac{1 - (1 - \lambda)^\alpha}{\lambda}, \quad Q_{\alpha,\kappa}(\lambda) := \frac{\theta_\alpha(\lambda) - \kappa}{\left(1 - \dfrac{\lambda}{2 - \kappa}\right)\dfrac{\kappa}{2}}.
 \end{equation}
 For $\kappa = 1$, we extend continuously $Q_{\alpha,1}(1)$ by taking $Q_{\alpha,1}(1) := 2$. Then,
\begin{itemize}
 \item[$(a)$] The function $\theta_\alpha$ is strictly increasing for $\alpha \in \, ]0,1[$, strictly decreasing for $\alpha > 1$, and identically equal to $1$ for $\alpha = 1$. Moreover,
 $$\inf_{\lambda \in\, ]0,1]} \theta_\alpha(\lambda) =
 \begin{cases}
  \alpha & {\rm if } ~  \alpha \in\, ]0,1], \\
  1 & {\rm if } ~ \alpha > 1. 
 \end{cases}
 $$

 \item[$(b)$] The function $Q_{\alpha,\kappa}(\lambda)$ is strictly increasing on $\, ]0,1]$ for $\alpha \in \, ]0,2[$, and strictly decreasing for $\alpha > 2$. Moreover,
 $$ \inf_{\lambda \in \, ]0,1]} Q_{\alpha,\kappa}(\lambda) =
 \begin{cases}
  \dfrac{2(\alpha - \kappa)}{\kappa} & {\rm if } ~ \alpha \in\, ]0,2[, \\
  \displaystyle\frac{2(2 - \kappa)}{\kappa} & {\rm if } ~ \alpha \geq 2.
 \end{cases}
 $$
 \end{itemize}
\end{lemma}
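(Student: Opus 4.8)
The plan is to treat the two functions $\theta_\alpha$ and $Q_{\alpha,\kappa}$ separately, in each case establishing the claimed monotonicity by an elementary calculus argument and then reading off the infimum as a boundary (or limiting) value.

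For part $(a)$, I would first handle the degenerate case $\alpha = 1$, where $\theta_1(\lambda) = [1-(1-\lambda)]/\lambda = 1$ identically, so both claims are immediate. For $\alpha \neq 1$, set $s = 1 - \lambda \in [0,1[$ and write $\theta_\alpha$ as a function of $s$; equivalently, differentiate $\theta_\alpha(\lambda) = (1-(1-\lambda)^\alpha)/\lambda$ directly. The derivative has the sign of the numerator $g(\lambda) := \alpha \lambda (1-\lambda)^{\alpha-1} - 1 + (1-\lambda)^\alpha$, and I would show $g$ has a fixed sign on $]0,1[$ by noting $g(0) = 0$ and $g'(\lambda) = \alpha(\alpha-1)\lambda(1-\lambda)^{\alpha-2}\cdot(-1)$ up to sign bookkeeping — so $g' $ is negative for $\alpha > 1$ and positive for $\alpha \in\,]0,1[$, giving $g < 0$ (resp. $g > 0$) on $]0,1[$ and hence $\theta_\alpha$ strictly decreasing (resp. strictly increasing). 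The infimum then follows: for $\alpha > 1$ the decreasing function tends to its limit as $\lambda \to 1^-$, and $\theta_\alpha(1) = 1$; for $\alpha \in\,]0,1[$ the increasing function has infimum equal to $\lim_{\lambda \to 0^+}\theta_\alpha(\lambda)$, which by l'Hôpital (or the derivative of $\lambda \mapsto 1-(1-\lambda)^\alpha$ at $0$) equals $\alpha$.

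For part $(b)$, the key observation is the algebraic identity obtained by substituting the definition of $\theta_\alpha$ into that of $Q_{\alpha,\kappa}$: after clearing denominators one gets
\begin{equation*}
 Q_{\alpha,\kappa}(\lambda) = \frac{2\big(\theta_\alpha(\lambda) - \kappa\big)(2-\kappa)}{\kappa\,(2 - \kappa - \lambda)},
\end{equation*}
so that $Q_{\alpha,\kappa}$ is, up to the positive constant $2(2-\kappa)/\kappa$, the ratio $(\theta_\alpha(\lambda) - \kappa)/(2 - \kappa - \lambda)$. Since $\theta_\alpha - \kappa > 0$ on $]0,1]$ (this is exactly where the hypothesis $\kappa < \alpha$ for $\alpha \le 1$, resp. $\kappa \le 1 \le$ the decreasing range for $\alpha > 1$, is used together with part $(a)$ to guarantee positivity, with the $\kappa=1$, $\lambda=1$ value defined by continuity), both numerator and denominator are positive. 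I would then compute the sign of the derivative of this ratio; the numerator of that derivative is $\theta_\alpha'(\lambda)(2-\kappa-\lambda) + (\theta_\alpha(\lambda)-\kappa)$, and I expect that plugging in the explicit form of $\theta_\alpha$ and $\theta_\alpha'$ and simplifying yields an expression whose sign is governed solely by $2 - \alpha$: positive for $\alpha < 2$, negative for $\alpha > 2$, zero for $\alpha = 2$ (in which case $\theta_\alpha(\lambda) = (1-(1-\lambda)^2)/\lambda = 2 - \lambda$ and the ratio is constant equal to $1$, consistent with the stated value $2(2-\kappa)/\kappa$). Granting this, $Q_{\alpha,\kappa}$ is strictly increasing for $\alpha \in\,]0,2[$ and strictly decreasing for $\alpha > 2$; its infimum is therefore the value at $\lambda \to 0^+$ when increasing, namely $2(\alpha - \kappa)/\kappa$ using $\theta_\alpha(0^+) = \alpha$ and $(2-\kappa-\lambda) \to 2-\kappa$, and the value at $\lambda = 1$ when decreasing, namely $2(2-\kappa)/\kappa$ using $\theta_\alpha(1) = 1$ and $2 - \kappa - 1 = 1 - \kappa$ — wait, that would give $2(1-\kappa)(2-\kappa)/(\kappa(1-\kappa)) = 2(2-\kappa)/\kappa$, as claimed; and for $\alpha = 2$ the constant value $2(2-\kappa)/\kappa$ covers the boundary case $\alpha \ge 2$.

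The main obstacle is the sign computation for $Q_{\alpha,\kappa}'$ in part $(b)$: the derivative of the ratio involves $\theta_\alpha'$, which itself carries a factor $(1-\lambda)^{\alpha-1}$ and a linear-in-$\lambda$ correction, so after substitution one faces a moderately long expression in $\lambda$, $(1-\lambda)^{\alpha}$, $\alpha$ and $\kappa$ that must be shown to have constant sign. I expect the cleanest route is to factor out $(1-\lambda)^{\alpha}$ (or $1 - (1-\lambda)^\alpha$) and reduce to showing a one-variable inequality such as $(1-\lambda)^\alpha \ge 1 - \alpha\lambda + \text{(correction)}$ that follows from convexity/concavity of $s \mapsto s^\alpha$ and the Bernoulli-type inequality, with the threshold $\alpha = 2$ emerging naturally from comparing against the quadratic Taylor term. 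Once that inequality is isolated the rest is bookkeeping, and the infima in both parts are then just one-sided limits justified by the monotonicity already proved.
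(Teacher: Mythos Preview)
The paper states this lemma without proof, calling it a ``simple result,'' so there is no authors' argument to compare against. Your treatment of part $(a)$ is complete and correct: $g'(\lambda)=-\alpha(\alpha-1)\lambda(1-\lambda)^{\alpha-2}$ together with $g(0)=0$ yields the monotonicity, and the infima follow. In part $(b)$ your rewriting $Q_{\alpha,\kappa}(\lambda)=\tfrac{2(2-\kappa)}{\kappa}\cdot\tfrac{\theta_\alpha(\lambda)-\kappa}{2-\kappa-\lambda}$ is the right move, the $\alpha=2$ sanity check is valid, and the limiting values $2(\alpha-\kappa)/\kappa$ and $2(2-\kappa)/\kappa$ are read off correctly. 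But the sign of $N(\lambda):=\theta_\alpha'(\lambda)(2-\kappa-\lambda)+\theta_\alpha(\lambda)-\kappa$ is the entire content of the monotonicity claim, and you leave it as an ``expectation.'' The Bernoulli-type route you sketch does not close as stated: after substitution the expression is not a clean convexity comparison, and for large $\alpha$ several natural intermediate quantities change sign on $]0,1[$, so one cannot just ``factor and compare to the quadratic Taylor term.''

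A way to fill the gap: observe that $N$ is \emph{affine in $\kappa$}, so it suffices to control its sign at $\kappa=0$ and $\kappa=1$ (for $\alpha\le 1$ this is unnecessary, since then $\theta_\alpha-\kappa$ is increasing and the denominator decreasing). At $\kappa=1$ one has the identity
\[
Q_{\alpha,1}(\lambda)=\frac{2(\theta_\alpha(\lambda)-1)}{1-\lambda}=\frac{2\bigl[1-(1-\lambda)^{\alpha-1}\bigr]}{\lambda}=2\,\theta_{\alpha-1}(\lambda),
\]
so the monotonicity follows from part $(a)$ applied with exponent $\alpha-1$. At $\kappa=0$, writing $u=1-\lambda$, one computes $\lambda^{2}N = u\,\psi(u)$ with $\psi(u)=\alpha u^{\alpha-2}+(2-\alpha)u^{\alpha}-2$; since $\psi(1)=0$ and $\psi'(u)=\alpha(\alpha-2)u^{\alpha-3}(1-u^{2})$ has the sign of $\alpha-2$, one gets $\psi$ (hence $N$) of sign $2-\alpha$ on $]0,1[$. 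Linearity in $\kappa$ then gives $N$ the sign of $2-\alpha$ for every admissible $\kappa$, which is exactly what you needed.
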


 The proof of Lemma \ref{lemma:theta} follows from a direct direct analysis of the functions $\theta_\alpha$ and $Q_{\alpha,\kappa}$, and is therefore omitted.

\begin{lemma}\label{lemmaC}
 Let $K \subset \mathbb{R}^n$ be a nonempty closed convex set with $0 \in K$, and $h : K \to \mathbb{R}$ be a function satisfying
 \begin{equation}\label{func:assump}
  h(tx) = t^\alpha h(x), ~ \forall ~ x \in K, ~ \forall ~ t \in [0,1],
 \end{equation}
 for some $\alpha > 0$, with $0 \in {\rm argmin}_K\,h$. Consider the functions $\theta_\alpha$ and $Q_{\alpha,\kappa}$ given in \eqref{thetaQ}. Then the following assertions hold:
 \begin{itemize}
  \item[$(a)$] The function $h$ is $\kappa$-quasar-convex on $K$ with respect to $\overline{x} = 0$ if and only if
  \begin{equation}\label{kappa:holds}
   0 < \kappa \leq \theta_\alpha (\lambda), ~ \forall ~ \lambda \in\, ]0,1].
  \end{equation}

  \item[$(b)$] The function $h$ is $(\kappa, \gamma)$-strongly quasar-convex on $K$ with respect to $\overline{x} = 0$ if and only if 
  \eqref{kappa:holds} holds and
  \begin{equation}\label{gamma:holds}
   0 \leq  \gamma \leq Q_{\alpha, \kappa}(\lambda) \frac{h(x)}{\lVert x \rVert^2}, ~ \forall ~ x \in K \setminus \{0\}, ~ \forall ~ \lambda \in \, ]0,1].
  \end{equation}
 \end{itemize}
\end{lemma}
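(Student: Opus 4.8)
The plan is to reduce the functional inequalities defining quasar-convexity directly to pointwise inequalities on $\lambda$ by exploiting the homogeneity assumption \eqref{func:assump}, and then invoke Lemma \ref{lemma:theta} to convert the ``for all $\lambda$'' quantifier into the stated infimum conditions. The key observation is that, since $\overline{x} = 0$, the convex combination $\lambda \overline{x} + (1-\lambda)x$ simplifies to $(1-\lambda)x$, and by \eqref{func:assump} we have $h((1-\lambda)x) = (1-\lambda)^\alpha h(x)$ for $x \in K$ and $\lambda \in [0,1]$ (note $0 \in K$ guarantees the segment stays in $K$, and $(1-\lambda) \in [0,1]$ so the scaling identity applies). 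Also $h(\overline{x}) = h(0) = 0$ by homogeneity with $t = 0$.

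For part $(a)$: I would substitute these simplifications into the definition of $\kappa$-quasar-convexity (Definition \ref{def:quasar} with $\gamma = 0$). The inequality $h((1-\lambda)x) \le (1-\kappa\lambda)h(x)$ becomes $(1-\lambda)^\alpha h(x) \le (1 - \kappa\lambda) h(x)$. Since $0 \in \mathrm{argmin}_K h$ and $h(0) = 0$, we have $h(x) \ge 0$ on $K$; splitting into the cases $h(x) > 0$ and $h(x) = 0$, the inequality for all such $x$ is equivalent to $(1-\lambda)^\alpha \le 1 - \kappa\lambda$ for all $\lambda \in [0,1]$, which (dividing by $\lambda > 0$, the case $\lambda = 0$ being trivial) is exactly $\kappa \le \theta_\alpha(\lambda)$ for all $\lambda \in \,]0,1]$. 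This gives \eqref{kappa:holds}. One should also note that \eqref{kappa:holds} together with Lemma \ref{lemma:theta}$(a)$ forces $\kappa \le \alpha$ (and $\kappa \le 1$), consistent with the admissible range of $\kappa$ in Lemma \ref{lemma:theta}.

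For part $(b)$: the same substitution into the strong quasar-convex inequality yields $(1-\lambda)^\alpha h(x) \le (1-\kappa\lambda)h(x) - \lambda\bigl(1 - \tfrac{\lambda}{2-\kappa}\bigr)\tfrac{\kappa\gamma}{2}\lVert x - 0\rVert^2$. Rearranging, this reads $\bigl[(1-\lambda)^\alpha - 1 + \kappa\lambda\bigr]h(x) \le -\lambda\bigl(1 - \tfrac{\lambda}{2-\kappa}\bigr)\tfrac{\kappa\gamma}{2}\lVert x\rVert^2$, i.e. $\lambda\bigl[\theta_\alpha(\lambda) - \kappa\bigr]h(x) \ge \lambda\bigl(1 - \tfrac{\lambda}{2-\kappa}\bigr)\tfrac{\kappa\gamma}{2}\lVert x\rVert^2$ after multiplying by $-1$ and using $1 - (1-\lambda)^\alpha = \lambda\theta_\alpha(\lambda)$. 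Dividing by $\lambda > 0$ and by the positive factor $\bigl(1 - \tfrac{\lambda}{2-\kappa}\bigr)\tfrac{\kappa}{2}$ gives $\gamma \le Q_{\alpha,\kappa}(\lambda)\tfrac{h(x)}{\lVert x\rVert^2}$ for all $x \in K\setminus\{0\}$ and all $\lambda \in \,]0,1]$, which is \eqref{gamma:holds}; the requirement $\gamma > 0$ is part of the definition of strong quasar-convexity, and \eqref{kappa:holds} must hold as well since it is the $\gamma \to 0^+$ consequence (or simply because \eqref{gamma:holds} with $\gamma > 0$ forces $Q_{\alpha,\kappa}(\lambda) > 0$, hence $\theta_\alpha(\lambda) > \kappa$, which is even slightly stronger). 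Conversely, each step is reversible, so the pointwise conditions imply the functional inequalities.

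The main subtlety — not a deep obstacle, but the point requiring care — is the handling of the degenerate values: the factor $\bigl(1 - \tfrac{\lambda}{2-\kappa}\bigr)$ vanishes only at $\lambda = 2-\kappa$, which lies in $]0,1]$ precisely when $\kappa = 1$ and $\lambda = 1$; there $Q_{\alpha,\kappa}$ is defined by continuous extension (value $2$ per Lemma \ref{lemma:theta}), and one checks the original inequality at $(\kappa,\lambda) = (1,1)$ reduces to $0 \le 0$, so no information is lost. One must also be careful that the equivalence in $(b)$ is stated as ``there exist $\kappa$ and $\gamma$'' — so the ``only if'' direction uses the $\kappa,\gamma$ witnessing strong quasar-convexity, and the ``if'' direction produces strong quasar-convexity with that same pair. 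Finally, the reduction $h(x) \ge 0$ on $K$ (used to drop/keep the $h(x)$ factor with the correct inequality direction) follows from $0 \in \mathrm{argmin}_K h$ and $h(0) = 0$, and should be recorded explicitly.
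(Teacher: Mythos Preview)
Your proposal is correct and follows essentially the same route as the paper's proof: substitute $\overline{x}=0$, use the homogeneity \eqref{func:assump} to replace $h((1-\lambda)x)$ by $(1-\lambda)^\alpha h(x)$ and $h(0)$ by $0$, then divide through to isolate $\theta_\alpha(\lambda)$ and $Q_{\alpha,\kappa}(\lambda)$. If anything, you are more careful than the paper about the edge cases (the sign of $h(x)$, the $h(x)=0$ case, and the degenerate point $(\kappa,\lambda)=(1,1)$); note that Lemma~\ref{lemma:theta} is not actually needed here, since the statement of Lemma~\ref{lemmaC} already retains the ``for all $\lambda$'' quantifier rather than passing to an infimum.
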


\begin{proof}
 Since $\overline{x} = 0 \in {\rm argmin}_{K}\,h$ and $h$ satisfies \eqref{func:assump}, $h(0)=0$. Thus, $h$ is $(\kappa, \gamma)$-strong\-ly quasar-convex with respect to $\overline{x} = 0$ if and only if
\begin{align}
 & h((1 - \lambda)x) \leq (1 - \kappa \lambda) h(x) - \lambda \left(1 - \frac{\lambda}{2 - \kappa} \right) \frac{\kappa \gamma}{2} \lVert x\rVert ^2, \notag\\
 & \Longleftrightarrow (1-\lambda)^\alpha  \leq (1 - \kappa \lambda) - \lambda \left(1 - \frac{\lambda}{2 - \kappa} \right) \frac{\kappa \gamma}{2} \dfrac{\lVert x\rVert^2}{h(x)}. \label{hqc5} 
\end{align}

If $\gamma = 0$, then $h$ is $\kappa$-quasar-convex with respect to $\overline{x} = 0$ if and only if $0<\kappa \leq \theta_{\alpha}(\lambda)$, and $(a)$ follows. If $\gamma > 0$, then inequality \eqref{hqc5} is equivalent to
\begin{align*}
 & \theta_\alpha(\lambda) - \kappa \geq \left(1 - \frac{\lambda}{2 - \kappa} \right) \frac{\kappa \gamma}{2} \frac{\lVert x \rVert^2}{h(x)} ~ \Longrightarrow ~ \gamma \leq Q_{\alpha,\kappa}(\lambda) \frac{h(x)}{\lVert x \rVert^2}, 
\end{align*} 
for all $x \in K \setminus \{0\}$ and all $\lambda \in \, ]0,1]$, where $Q_{\alpha,\kappa}$ is defined in \eqref{thetaQ}. 
\end{proof}

As a consequence, we have the following.

\begin{proposition}\label{f.h}
 Let $K \subset \mathbb{R}^n$ be a nonempty closed convex set with $0 \in K$, and $h : K \to \mathbb{R}$ be a function satisfying \eqref{func:assump} with $0 \in {\rm argmin}_{K}\,h$. Then the following assertions hold:
\begin{itemize}
 \item[$(a)$] If $0 < \alpha < 1$, then $h$ is $\kappa$-quasar-convex on $K$ with respect to $\overline{x} = 0$ for every $\kappa \in \, ]0, \alpha]$.

 \item[$(b)$] If $\alpha \geq 1$, then $h$ is $\kappa$-quasar-convex on $K$ with respect to $\overline{x} = 0$ for every $\kappa \in \, ]0, 1]$.
\end{itemize}
\end{proposition}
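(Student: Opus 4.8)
The plan is to derive Proposition \ref{f.h} directly from Lemma \ref{lemmaC}$(a)$ together with the monotonicity and infimum formulas for $\theta_\alpha$ recorded in Lemma \ref{lemma:theta}$(a)$. By Lemma \ref{lemmaC}$(a)$, the function $h$ is $\kappa$-quasar-convex with respect to $\overline{x}=0$ if and only if $0<\kappa\le\theta_\alpha(\lambda)$ for every $\lambda\in\,]0,1]$, which is the same as requiring $0<\kappa\le\inf_{\lambda\in\,]0,1]}\theta_\alpha(\lambda)$. So the entire statement reduces to identifying this infimum, and that is precisely what Lemma \ref{lemma:theta}$(a)$ provides.

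First I would treat the case $0<\alpha<1$. Here Lemma \ref{lemma:theta}$(a)$ gives $\inf_{\lambda\in\,]0,1]}\theta_\alpha(\lambda)=\alpha$ (the value is attained as $\lambda\uparrow 1$ since $\theta_\alpha$ is strictly increasing with $\theta_\alpha(1)=\alpha$). Hence the condition $0<\kappa\le\theta_\alpha(\lambda)$ for all $\lambda\in\,]0,1]$ holds exactly when $\kappa\in\,]0,\alpha]$, which by Lemma \ref{lemmaC}$(a)$ is equivalent to $h$ being $\kappa$-quasar-convex with respect to $\overline{x}=0$; this yields part $(a)$. For $\alpha\ge 1$ I would split into $\alpha=1$, where $\theta_1\equiv 1$, and $\alpha>1$, where $\theta_\alpha$ is strictly decreasing on $]0,1]$ and $\inf_{\lambda\in\,]0,1]}\theta_\alpha(\lambda)=1$; in either subcase the infimum equals $1$, so $0<\kappa\le\theta_\alpha(\lambda)$ for all $\lambda$ holds precisely when $\kappa\in\,]0,1]$, and Lemma \ref{lemmaC}$(a)$ gives part $(b)$.

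There is essentially no obstacle here: the proposition is a corollary, and the only thing to be slightly careful about is the logical chaining through Lemma \ref{lemmaC}$(a)$, namely that "$h$ is $\kappa$-quasar-convex" is equivalent to "$0<\kappa\le\theta_\alpha(\lambda)$ for all $\lambda$", and that the latter is in turn equivalent to "$\kappa\le\inf_\lambda\theta_\alpha(\lambda)$". One should also note that the hypotheses of Lemma \ref{lemmaC} (namely $K$ nonempty closed convex with $0\in K$, $h$ satisfying \eqref{func:assump}, and $0\in{\rm argmin}_K\,h$) are exactly those assumed in Proposition \ref{f.h}, so no extra verification is needed. Writing it all out is a two-line argument invoking the two preceding lemmas.
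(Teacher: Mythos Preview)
Your approach is exactly the paper's: invoke Lemma~\ref{lemmaC}$(a)$ to reduce quasar-convexity to the inequality $0<\kappa\le\inf_{\lambda\in\,]0,1]}\theta_\alpha(\lambda)$, and then read off the infimum from Lemma~\ref{lemma:theta}$(a)$.

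There is, however, a slip in your parenthetical for the case $0<\alpha<1$. You write that the infimum ``is attained as $\lambda\uparrow 1$ since $\theta_\alpha$ is strictly increasing with $\theta_\alpha(1)=\alpha$''. In fact $\theta_\alpha(1)=\dfrac{1-0^\alpha}{1}=1$ for every $\alpha>0$, and a strictly increasing function on $]0,1]$ has its infimum at the left endpoint, not the right. The infimum $\alpha$ arises as $\lim_{\lambda\downarrow 0}\theta_\alpha(\lambda)=\alpha$ (which is exactly what the paper's proof notes) and is not attained; since $\theta_\alpha(\lambda)>\alpha$ for all $\lambda\in\,]0,1]$, the endpoint value $\kappa=\alpha$ is still admissible, so your conclusion $\kappa\in\,]0,\alpha]$ remains correct. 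Just fix the parenthetical.
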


\begin{proof}
 Direct consequence of Lemmas \ref{lemma:theta}$(a)$ and \ref{lemmaC}$(a)$, using that $\lim_{\lambda \downarrow 0} \theta_\alpha (\lambda) = \alpha$ and the monotonicity of $\theta_\alpha$.
\end{proof}

Sufficient conditions for ensuring strong quasar-convexity are given below.

\begin{proposition}\label{f.h1}
Let $K \subset \mathbb{R}^n$ be a nonempty compact convex set with $0 \in K$, and suppose there exists $c > 0$ such that
\begin{equation}\label{H}
 x \in K \backslash \{0\}, ~~ \lVert x \rVert < c ~ \Longrightarrow ~ \dfrac{c x}{\lVert x \rVert} \in K.
 \end{equation}
 \noindent Let $h : K \to \mathbb{R}$ be a lsc function satisfying \eqref{func:assump} with ${\rm argmin}_{K}\, h = \{0\}$. Define $M := \sup\{\lVert x \rVert : x \in K\} < + \infty$ and  $S_c := \min\{ h(x) : x \in K, \lVert x \rVert = c \}$. Then the following assertions hold:
 \begin{itemize}
  \item[$(a)$] If $0 < \alpha \leq 1$, then $h$ is $(\kappa, \gamma)$-strongly quasar-convex on $K$ with respect to $\overline{x} = 0$ for every
  $\kappa \in\, ]0, \alpha[$ and 
  $$\gamma := \frac{2(\alpha - \kappa)\, S_c}{\kappa\, c^\alpha\, M^{2 - \alpha}} > 0.$$

  \item[$(b)$] If $1 < \alpha < 2$, then $h$ is $(\kappa, \gamma)$-strongly quasar-convex on $K$ with respect to $\overline{x} = 0$ for every
  $\kappa \in \, ]0, 1]$ and 
  $$\gamma := \frac{2(\alpha - \kappa)\, S_c}{\kappa\, c^\alpha\, M^{2 - \alpha}} > 0.$$

  \item[$(c)$] If $\alpha = 2$, then $h$ is $(\kappa, \gamma)$-strongly quasar-convex on $K$ with respect to $\overline{x} = 0$ for every
  $\kappa \in\, ]0, 1]$ and 
  $$\gamma := \frac{2(2 - \kappa)\, S_c}{\kappa\, c^2} > 0.$$
 \end{itemize}
\end{proposition}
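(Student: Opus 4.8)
The plan is to deduce Proposition \ref{f.h1} from Lemmas \ref{lemma:theta}(b) and \ref{lemmaC}(b) by establishing the uniform lower bound $\frac{h(x)}{\lVert x\rVert^2}\ge \frac{S_c}{c^\alpha M^{2-\alpha}}$ on $K\setminus\{0\}$. First I would record, as in the (commented-out) earlier attempt, that $S_c>0$: the set $\{x\in K:\lVert x\rVert=c\}$ is nonempty (it is reached by scaling any nonzero point of $K$ via \eqref{H}, or directly if $K$ already contains points of norm $c$) and compact, $h$ is lsc, hence the minimum $S_c$ is attained at some $x_0$; since $x_0\neq 0$ and $0$ is the \emph{unique} minimizer, $S_c=h(x_0)>0$.

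The core estimate is the following case split for a fixed $x\in K\setminus\{0\}$. If $\lVert x\rVert\ge c$, write $x=t\,(x/t)$ with $t=\lVert x\rVert/c\ge 1$; but $t>1$ is not allowed in \eqref{func:assump}, so instead I would go the other way: set $y:=\frac{c}{\lVert x\rVert}x$, which has $\lVert y\rVert=c$ and lies in $K$ (by convexity, since it is on the segment $[0,x]\subset K$), and note $x=s y$ with $s=\lVert x\rVert/c$. When $s\le 1$, \eqref{func:assump} gives $h(x)=s^\alpha h(y)=\frac{\lVert x\rVert^\alpha}{c^\alpha}h(y)\ge \frac{\lVert x\rVert^\alpha}{c^\alpha}S_c$, hence $\frac{h(x)}{\lVert x\rVert^2}\ge \frac{S_c}{c^\alpha \lVert x\rVert^{2-\alpha}}\ge \frac{S_c}{c^\alpha M^{2-\alpha}}$ (using $\alpha\le 2$ so $\lVert x\rVert^{2-\alpha}\le M^{2-\alpha}$). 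When $\lVert x\rVert<c$, hypothesis \eqref{H} is exactly what is needed: $y=\frac{cx}{\lVert x\rVert}\in K$, again $\lVert y\rVert=c$ and $x=sy$ with $s=\lVert x\rVert/c<1$, so the same computation applies. Thus in all cases $\frac{h(x)}{\lVert x\rVert^2}\ge \frac{S_c}{c^\alpha M^{2-\alpha}}>0$.

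With this bound in hand, parts (a)--(c) are immediate from Lemma \ref{lemmaC}(b): for any admissible $\kappa$ (namely $\kappa\in\,]0,\alpha[$ when $\alpha\le 1$, and $\kappa\in\,]0,1]$ when $\alpha>1$, which is precisely the range in which Lemma \ref{lemma:theta} applies), condition \eqref{kappa:holds} holds by Proposition \ref{f.h}, and \eqref{gamma:holds} holds provided $0<\gamma\le \big(\inf_{\lambda\in\,]0,1]}Q_{\alpha,\kappa}(\lambda)\big)\cdot\inf_{x\in K\setminus\{0\}}\frac{h(x)}{\lVert x\rVert^2}$. By Lemma \ref{lemma:theta}(b) the first infimum equals $\frac{2(\alpha-\kappa)}{\kappa}$ when $\alpha\in\,]0,2[$ and $\frac{2(2-\kappa)}{\kappa}$ when $\alpha\ge 2$ (here $\alpha=2$); multiplying by the lower bound $\frac{S_c}{c^\alpha M^{2-\alpha}}$ (with $M^{2-\alpha}=1$ when $\alpha=2$, absorbed into the stated formula) yields exactly the value of $\gamma$ claimed in each item. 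Finally I would note $\alpha-\kappa>0$ in (a)--(b) because $\kappa<\alpha$, so $\gamma>0$.

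The only genuinely delicate point is handling the geometry near the origin: without a hypothesis like \eqref{H}, the ratio $h(x)/\lVert x\rVert^2$ could fail to be bounded below on $K\setminus\{0\}$ (for $\alpha<2$ it behaves like $\lVert x\rVert^{\alpha-2}\to\infty$ as $x\to 0$, so that direction is fine, but the argument needs a point of norm exactly $c$ collinear with $x$ to invoke homogeneity cleanly). So the main obstacle is simply making precise that \eqref{H} together with convexity of $K$ guarantees, for every $x\in K\setminus\{0\}$, the existence of $y\in K$ with $\lVert y\rVert=c$ and $x\in[0,y]$ — i.e., that the radial projection onto the sphere of radius $c$ stays inside $K$ — which the case split above handles.
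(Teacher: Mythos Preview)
Your approach coincides with the paper's: bound $h(x)/\lVert x\rVert^{2}$ below by $S_c/(c^{\alpha}M^{2-\alpha})$ via the radial point $y=cx/\lVert x\rVert$, then invoke Lemmas~\ref{lemma:theta}(b) and~\ref{lemmaC}(b). The only issue is that your case split is tangled: you open with ``If $\lVert x\rVert\ge c$'' and note $s=\lVert x\rVert/c\ge 1$, but then carry out the homogeneity computation only ``when $s\le 1$'', and your second case ``$\lVert x\rVert<c$'' again treats $s<1$ --- so the case $\lVert x\rVert>c$ is never actually closed. The paper handles it exactly as your phrase ``go the other way'' suggests but does not execute: since $c/\lVert x\rVert<1$, apply \eqref{func:assump} to $y=(c/\lVert x\rVert)x$ to get $h(y)=(c/\lVert x\rVert)^{\alpha}h(x)$, hence $h(x)/\lVert x\rVert^{2}=(h(y)/c^{\alpha})\,\lVert x\rVert^{\alpha-2}\ge S_c/(c^{\alpha}M^{2-\alpha})$ (using $\alpha\le 2$). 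With that one line inserted, your argument is the paper's proof.
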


\begin{proof}
Since $K_c := \{x \in K : \lVert x \rVert = c\}$ is compact and $h$ is lsc, there exists $x^{0} \in K_c$ such that
$$
h(x^{0}) = \min_{x \in K_c} h(x) = S_c.
$$
As $x^{0} \neq 0$ and $\mathrm{argmin}_K\, h = \{0\}$, it follows that $S_c > 0$.
Let $x \in K \setminus \{0\}$ and take $v := \frac{c x}{\lVert x \rVert}$. By assumption \eqref{H}, we have $v \in K$ whenever $\lVert x\rVert  \leq c$, thus the convexity of $K$ ensures $v \in K$ when $\lVert x\rVert  > c$.
Then we have two cases:
\begin{itemize}
 \item[$(i)$] If $\lVert x\rVert  \leq c$, then $\frac{\lVert x\rVert }{c} \leq 1$, and using \eqref{func:assump} we have
 $$ h(x) = \left( \frac{\lVert x\rVert }{c} \right)^\alpha h(v) ~ \Longrightarrow ~ \frac{h(x)}{\lVert x\rVert ^2} = \frac{h(v)}{c^\alpha\, \lVert x\rVert ^{2 - \alpha}} \geq \frac{S_c}{c^\alpha M^{2 - \alpha}}.$$

 \item[$(ii)$] If $\lVert x \rVert > c$, then $\frac{c}{\lVert x \rVert}< 1$, and again by \eqref{func:assump} we have
 $$ h(v) = \left( \frac{c}{\lVert x \rVert} \right)^\alpha h(x) ~ \Longrightarrow ~ \frac{h(x)}{\lVert x \rVert^2} = \frac{h(v)}{c^\alpha} \cdot \lVert x \rVert^{\alpha - 2} \geq \frac{S_c}{c^\alpha M^{2 - \alpha}}.$$
\end{itemize}
Hence, in both cases,
$$\frac{h(x)}{\lVert x \rVert^2} \geq \frac{S_c}{c^\alpha M^{2 - \alpha}}, ~ \forall ~ x \in K \setminus \{0\}.
$$
Therefore, the conclusion follows from Lemmas \ref{lemma:theta}$(b)$ and \ref{lemmaC}$(b)$.
\end{proof}

\begin{remark}
Note that for $0<\alpha < 2$, the boundedness of $K$ is required. For $\alpha = 2$, this assumption is no longer necessary. However, when $\alpha > 2$, we have
$$\lim_{x \to 0} \ \frac{h(x)}{\lVert x \rVert^2} = \lim_{x \to 0} \left( h\left(\frac{x}{\lVert x \rVert} \right) \lVert x \rVert^{\alpha - 2} \right) = 0,$$
which shows that $h$ is not strongly quasar-convex in this case.
\end{remark}

The previous results have interesting consequences, illustrated by the fo\-llo\-wing examples of nonsmooth (strongly) quasar-convex functions.

Notably, the $\ell_p$-regularization ($0<p<1$) is quasar-convex. As far as we know, this is the first result describing the generalized convexity notion related to $\ell_p$-regularization functions.

\begin{example}\label{lp:func} 
 {\bf (The $\ell_{p}$-regularization with $0<p<1$)} Let $0<p<1$ and $h_{p}: \mathbb{R}^{n} \rightarrow \mathbb{R}$ be given by 
 \begin{equation}\label{lp:regu}
  h_{p} (x) = \lVert x \rVert_{p} ^p= \sum^{n}_{i=1} \lvert x_{i} \rvert^{p}.
  \end{equation} 
 Clearly, $h_{p}$ is not quasiconvex (see \cite[Remark 3.3]{AIKL}).
 
 It follows from Proposition \ref{f.h} that $h_{p}$ is quasar-convex on $\mathbb{R}^{n}$ with $\kappa \in \, ]0, p]$. Moreover, using the notation of Proposition \ref{f.h1}, and restricting $h_p$ to $K=\overline{\mathbb{B}}(0,1)$ with $c=1$, we obtain
$S_1=\min_{\|x\|=1}\|x\|_p^p=1.$
Therefore, Proposition \ref{f.h1} ensures that $h_p$ is strongly quasar-convex on $\overline{\mathbb{B}}(0,1)$ with

\begin{equation*}
 \kappa \in\, ]0, p[ ~~ {\rm and} ~~ \gamma_{p} = \dfrac{2(p - \kappa)}{\kappa} > 0.
\end{equation*}

 It is known that the $\ell_p$-regularization is an important tool in sparse optimization, signal/image recovery, LASSO and several applications in machine learning and statistics among others (see \cite{KF,Lu,Survey} and references therein).
\end{example}

In the following example, we show that the CES (Constant Elasticity of Substitution) production/utility function \cite{ACMS} is quasar-convex.

\begin{example}\label{ces:func}
 {\bf (The CES production/utility function)} Let $\beta \neq 0$ and $\alpha_{i} > 0$ for all $i \in \{1, \ldots, n\}$. Then the CES function is $h_{C}: \mathbb{R}^{n}_{+} \rightarrow \mathbb{R}$ given by (see \cite{ACMS,Mc})
 \begin{equation}\label{ces:function}
  h_{C} (x) = (\alpha_{1} x^{\beta}_{1} + \ldots + \alpha_{n} x^{\beta}_{n})^{\frac{1}{\beta}}.
 \end{equation}
 For $\beta < 0$, the function defined in \eqref{ces:function} can be continuously extended to $\mathbb{R}^n_{+}$ by setting $h_C(x) := 0$ for $x \in \mathbb{R}^n_{+} \setminus \mathbb{R}^n_{++}$.
 
 It is known that $h_{C}$ is quasiconcave if and only if $\beta \leq 1$, and it is convex if and only if  $\beta \geq 1$ (see \cite[Theorem 2.4.2]{CMA}).
 
 On the other hand, 
 it follows from Pro\-po\-si\-tion \ref{f.h} that $h_{C}$ is $\kappa$-quasar-convex with respect to $0 \in {\rm argmin}_{\mathbb{R}^{n}_{+}}\,h_{C}$ with modulus $\kappa \in \, ]0, 1]$ for all $\beta \neq 0$. Furthermore, for $\beta > 0$, $h_{C}$ is strongly quasar-convex on bounded convex sets in virtue of Proposition \ref{f.h1}. In particular, for $K = \overline{\mathbb{B}}(0,1) \cap \mathbb{R}^n_+$ and $c = 1$, we have: 
 \begin{itemize}
  \item[$(i)$] If $\beta \in\, ]0,2]$ and $x \in K$ with $\lVert x \rVert = 1$, then
  $$h_C(x) \geq \left( \alpha_{1} x^{2}_{1} + \ldots + \alpha_{n} x^{2}_{n} \right)^{\frac{1}{\beta}} \geq \min \{ \alpha_1, \dots, \alpha_n \}^{\frac{1}{\beta}}.$$
  Thus, $S_1 = \min \{ \alpha_1, \dots, \alpha_n \}^{\frac{1}{\beta}}$, and by Proposition \ref{f.h1}, we get
  $$\gamma_{C} = \frac{2(1 - \kappa)\, \min \{ \alpha_1, \dots, \alpha_n \}^{\frac{1}{\beta}}}{\kappa} > 0.$$

  \item[$(ii)$] If $\beta > 2$ and $x \in K$ with $\lVert x\rVert = 1$, by H\"{o}lder's inequality, we obtain
  \begin{align*}
   & 1 = \sum_{i=1}^n x_i^2 = \sum_{i=1}^n \alpha_i^{\frac{2}{\beta}} x_i^2 \alpha_i^{-\frac{2}{\beta}} \leq \left( \sum_{i=1}^n \alpha_i x_i^\beta \right)^{\frac{2}{\beta}} \left( \sum_{i=1}^n \alpha_i^{ -\frac{2}{\beta - 2} } \right)^{1 - \frac{2}{\beta}} \\
   & \hspace{1.7cm} \Longleftrightarrow ~ h_C(x) \geq \left( \sum_{i=1}^{n} \alpha_i^{ -\frac{2}{\beta - 2} } \right)^{ -\frac{\beta - 2}{2\beta}}.
   \end{align*}
  Furthermore, equality in H\"{o}lder's inequality occurs on the unit sphere for  $x_i = \frac{ \alpha_i^{-1/(\beta - 2)}}{\left( \sum_{j=1}^{n} \alpha_j^{-2/(\beta - 2)} \right)^{1/2}}$ for all $i \in \{1, \ldots, n\}$, which leads to
  $$\gamma_{C} = \frac{2(1 - \kappa)}{\kappa} \left( \sum_{i=1}^{n} \alpha_i^{ -\frac{2}{\beta - 2}}\right)^{ -\frac{\beta - 2}{2\beta}} > 0.$$
 \end{itemize}

 The importance of the CES function is well-known in economics in virtue of its applications in production and utility theory among others (see  \cite{ACMS,ADSZ,CD,D-1959,Leon,MWG,Mc} and references therein).
\end{example}

In particular, the Cobb-Douglas \cite{CD} and Leontief \cite{Leon} production/utility functions are quasar-convex, too. Since the Cobb-Douglas function is differentiable, we describe below the Leontief (perfect complements) function.

\begin{example}\label{Leon:func}
{\bf (The Leontief production/utility function)} Let $\alpha > 0$ and $\alpha_{i} > 0$ for all $i \in \{1, \ldots, n\}$. Then the Leontief (see  \cite{Leon}) production/utility function is $h_{L}: \mathbb{R}^{n}_{+} \rightarrow \mathbb{R}$ given by 
 \begin{equation}\label{Leontief}
  h_{L}(x) = \min\left\{ \frac{1}{\alpha_{1}} x_{1}, \ldots, \frac{1}{\alpha_{n}} x_{n} \right\}^{\alpha}.
 \end{equation}
 It is known that $h_{L}$ is quasiconcave, and it is concave if and only if $0 <\alpha \leq 1$ (see, for instance, \cite[Theorem 2.4.3]{CMA}). 
 
 On the other hand, since $h_{L} (tx) = t^{\alpha} h_{L} (x)$ and $0 \in {\rm argmin}_{ \mathbb{R}^{n}}\,h_{L}$, we have $h_{L}(0)=0$, thus the Leontief function is $\kappa$-quasar-convex with modulus $\kappa \in \, ]0, \alpha]$ when $0 < \alpha < 1$, and with modulus $\kappa \in \, ]0, 1]$ when $\alpha \geq 1$ by Proposition \ref{f.h}.  Fur\-ther\-more, $h_{L}$ is not necessarily strongly quasar-convex on bounded convex sets since it fails to have a unique minimizer at $0$ (every point with one zero coordinate is a minimizer of $h_{L}$). To be strongly quasar-convex, $h_{L}$ should be de\-fi\-ned on a bounded convex set $K$ of $\mathbb{R}^{n}_{+}$ with $0 \in K$  and $K$ should not contain any other point with a zero component. 

 The Leontief function is an important tool in economics in virtue of its a\-ppli\-cations in production and utility theory based on the relationship between different commodities/goods (see \cite{ADSZ,Leon,MWG,Mc} among others).
\end{example}

Also, the Euclidean norm with power $0 < \alpha \leq 2$ is (strongly) quasar-convex.

\begin{example}\label{norm:alpha}
 {\bf (Euclidean norm with power $0 < \alpha \leq 2$)} Let $0 < \alpha \leq 2$ and $h_{E}: \mathbb{R}^{n} \rightarrow \mathbb{R}$ be given by 
 \begin{equation}\label{eucli:norm}
  h_{E}(x) = \lVert x \rVert^{\alpha}.
 \end{equation} 
 It is known that $h_{E}$ is quasiconvex on $\mathbb{R}^{n}$ and strongly quasiconvex on bounded convex sets by \cite[Corollary 3.9]{NS} and \cite[Page 27]{VNC-2}.
 
 On the other hand, it follows from Proposition \ref{f.h} that $h_{E}$ is $\kappa$-quasar-convex with respect to $\overline{x} = 0 \in {\rm argmin}_{\mathbb{R}^{n}}\,h_{E}$ with modulus $\kappa \in \, ]0, \alpha]$ when $0< \alpha <1$, and with modulus $\kappa \in \, ]0, 1]$ when $\alpha \geq 1$; and in both cases, it is strongly quasar-convex on bounded convex sets in virtue of Proposition \ref{f.h1}.
 When $\alpha = 2$, $h_{E}$ is strongly convex, thus strongly quasar-convex (with $\kappa = 1$) by Proposition \ref{f.h1}$(c)$. 
In particular, for $K := \overline{\mathbb{B}} (0,1)$ and $c = 1$, it follows from Proposition \ref{f.h1} that
 $$ \gamma_{E} = \dfrac{2(\alpha - \kappa)}{\kappa} > 0,$$
 with $\kappa \in \, ]0, \alpha[$ when $\alpha \leq 1$, while $\kappa \in\, ]0, 1]$ when $\alpha \in\, ]1, 2]$.
\end{example}

\subsection{On the Proximity Operator}

The next result provides an analysis of the behaviour at infinity of strongly quasar-convex functions. 

\begin{proposition}\label{2super:quasar} 
 Let $K \subset \mathbb{R}^{n}$ be a closed convex set and $h: \mathbb{R}^{n} \rightarrow \overline{\mathbb{R}}$ be a pro\-per lsc function such that $K \subset {\rm dom}\,h$. If $h$ is  a strongly quasar-convex function on $K$ with modulus $\kappa \in \, ]0, 1]$ and $\gamma > 0$, then $h$ is $2$-supercoercive  on $K$.
\end{proposition}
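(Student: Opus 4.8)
The plan is to deduce the result directly from the quadratic growth property \eqref{qwc:sconvex}, which every strongly quasar-convex function satisfies, by dividing through by $\lVert y \rVert^{2}$ and passing to the limit $\lVert y \rVert \to + \infty$.

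First I would fix $\overline{x} \in {\rm argmin}_{\mathbb{R}^{n}}\,h$; this minimizer exists (and is unique) by the hypothesis that $h$ is strongly quasar-convex, and since $h$ is proper with $\emptyset \neq K \subset {\rm dom}\,h$, the minimal value $h(\overline{x})$ is a finite real number. Applying \eqref{qwc:sconvex} gives
\[
 h(y) \;\geq\; h(\overline{x}) + \frac{\kappa \gamma}{2(2 - \kappa)} \lVert y - \overline{x} \rVert^{2}, \qquad \forall\, y \in \mathbb{R}^{n}
\]
(the inequality being trivial at points where $h(y) = + \infty$, so the extended-real-valued setting causes no difficulty).

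Next, for $y \neq 0$ I would divide both sides by $\lVert y \rVert^{2}$ to obtain
\[
 \frac{h(y)}{\lVert y \rVert^{2}} \;\geq\; \frac{h(\overline{x})}{\lVert y \rVert^{2}} + \frac{\kappa \gamma}{2(2 - \kappa)} \cdot \frac{\lVert y - \overline{x} \rVert^{2}}{\lVert y \rVert^{2}}.
\]
As $\lVert y \rVert \to + \infty$, the first term on the right tends to $0$ because $h(\overline{x}) \in \mathbb{R}$, while $\lVert y - \overline{x} \rVert^{2} / \lVert y \rVert^{2} \to 1$ (for instance by the triangle inequality, $\big|\lVert y - \overline{x}\rVert - \lVert y \rVert\big| \leq \lVert \overline{x} \rVert$). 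Hence
\[
 \liminf_{\lVert y \rVert \to + \infty} \frac{h(y)}{\lVert y \rVert^{2}} \;\geq\; \frac{\kappa \gamma}{2(2 - \kappa)} \;>\; 0,
\]
which is precisely $2$-supercoercivity. I do not anticipate any genuine obstacle here; the only point deserving a word of care is the finiteness of $h(\overline{x})$, which is guaranteed by properness of $h$ together with $K \subset {\rm dom}\,h$, and the harmless handling of the value $+\infty$ in \eqref{qwc:sconvex}.
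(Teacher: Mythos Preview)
Your proposal is correct and follows essentially the same approach as the paper: both proofs invoke the quadratic growth inequality \eqref{qwc:sconvex}, divide by $\lVert y\rVert^{2}$, and pass to the limit. Your version is in fact slightly more streamlined, since the paper phrases the argument via sequences $\{x_k\}\subset K$ and mentions the asymptotic cone $K^{\infty}$ without actually using it.
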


\begin{proof}
 Let $\{x^{k}\}_{k} \subset K$ be such that $\lVert x^{k} \rVert \rightarrow + \infty$. Then,
 we may assume that $\frac{x^{k}}{\lVert x^{k} \rVert} \rightarrow u \in K^{\infty}$ with $\lVert u \rVert= 1$. Since $h$ has a minimum $\overline{x} \in K$, $h(\overline{x}) \leq h(x^{k})$ for all $k \in \mathbb{N}$. Using relation \eqref{qwc:sconvex}, we have
\begin{align*} 
 h(x^{k}) \geq h(\overline{x}) + \frac{\kappa \gamma}{2(2-\kappa)} \lVert x^{k} - \overline{x} \rVert^{2} & \Longrightarrow \frac{h(x^{k})}{\lVert x^{k} \rVert^{2}} \geq \frac{h(\overline{x})}{\lVert x^{k} \rVert^{2}} + \frac{\kappa \gamma}{2(2-\kappa)} \left \Vert \frac{x^{k}}{\lVert x^{k} \rVert} - \frac{\overline{x}}{\lVert x^{k} \rVert} \right \Vert^{2} \\
 & \Longrightarrow \liminf_{k \rightarrow+ \infty} \frac{h(x^{k})}{\lVert x^{k} \rVert^{2}} \geq \frac{\kappa \gamma}{2(2-\kappa)} > 0,
\end{align*}
 thus $h$ is 2-super\-coercive on $K$.
\end{proof}

As a consequence, we have the following result (which  includes also the case $\gamma = 0$). 

\begin{corollary}\label{exist:iter}
 Let $K \subset \mathbb{R}^{n}$ be a closed convex set and $h: \mathbb{R}^{n} \rightarrow \overline{\mathbb{R}}$ be a pro\-per, lsc and $(\kappa, \gamma)$-strongly quasar-convex function with respect to $\overline{x} \in {\rm argmin}_{K}\,h$ ($\kappa \in \, ]0, 1]$ and $\gamma \geq 0$) on $K \subset {\rm dom}\,h$. Then $h(\cdot) + \frac{1}{2 \beta} \lVert \cdot \rVert^{2}$ is lsc and $2$-super\-coer\-ci\-ve on $K$ for all $\beta> 0$. 
 As a consequence, ${\rm Prox}_{\beta h} (K, z)$ is nonempty and compact for all $z \in \mathbb{R}^{n}$ and all $\beta > 0$.
\end{corollary}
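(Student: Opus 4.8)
The plan is to reduce the statement to a single elementary observation --- that the proximal objective is proper, lsc, and has (at least) quadratic growth from below --- and then to invoke the classical Weierstrass/coercivity argument. I would treat the cases $\gamma>0$ and $\gamma=0$ uniformly, since both collapse to the same estimate.

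First I would record that $h$ is bounded below: since $h$ is $(\kappa,\gamma)$-strongly quasar-convex with respect to $\overline{x}\in\mathrm{argmin}_{\mathbb{R}^n}h$ and $h$ is proper, we have $h(x)\geq h(\overline{x})>-\infty$ for all $x\in\mathbb{R}^n$. (When $\gamma>0$ one has the sharper quadratic growth \eqref{qwc:sconvex}, or Proposition \ref{2super:quasar}, but the crude bound $h\geq h(\overline{x})$ already suffices.) Consequently, for every $\beta>0$,
\begin{equation*}
 \liminf_{\lVert x\rVert\to+\infty}\frac{h(x)+\tfrac{1}{2\beta}\lVert x\rVert^2}{\lVert x\rVert^2}\;\geq\;\frac{1}{2\beta}\;>\;0,
\end{equation*}
i.e.\ $h(\cdot)+\tfrac{1}{2\beta}\lVert\cdot\rVert^2$ is $2$-supercoercive on $K$; and it is lsc on $K$, being the sum of the lsc function $h$, the continuous map $\tfrac{1}{2\beta}\lVert\cdot\rVert^2$, and (implicitly) the lsc indicator $\iota_K$ of the closed set $K$. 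For a general centre $z$, writing $\lVert z-x\rVert^2=\lVert x\rVert^2-2\langle z,x\rangle+\lVert z\rVert^2$ and using $\lvert\langle z,x\rangle\rvert\leq\lVert z\rVert\,\lVert x\rVert$, the cross term is negligible compared with $\lVert x\rVert^2$, so the same $\liminf$ bound gives that $x\mapsto h(x)+\tfrac{1}{2\beta}\lVert z-x\rVert^2$ is again lsc and $2$-supercoercive on $K$.

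Finally, $2$-supercoercivity implies coercivity, so the sublevel sets of $g:=h+\iota_K+\tfrac{1}{2\beta}\lVert z-\cdot\rVert^2$ are bounded; being sublevel sets of an lsc function, they are also closed, hence compact. Since $g$ is proper (finite at any point of $K\subset\mathrm{dom}\,h$), Weierstrass' theorem yields that $g$ attains its infimum, so $\mathrm{Prox}_{\beta h}(K,z)=\mathrm{argmin}_{x\in K}\big(h(x)+\tfrac{1}{2\beta}\lVert z-x\rVert^2\big)$ is nonempty; and this argmin set is itself a bounded, closed sublevel set of $g$ (at height $\inf g$), hence compact.

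I do not anticipate a genuine obstacle here --- this is why the paper labels the proof standard. The only point deserving a line of care is that replacing $\lVert\cdot\rVert^2$ by $\lVert z-\cdot\rVert^2$ preserves $2$-supercoercivity, which follows by absorbing the linear cross term via Cauchy--Schwarz; everything else is the routine coercivity-plus-lower-semicontinuity package, with the case $\gamma>0$ offering the alternative (unnecessary) route through Proposition \ref{2super:quasar}.
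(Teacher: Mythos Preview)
Your argument is correct and is precisely the standard route the paper has in mind (the paper omits the proof as ``standard''): bound $h$ below by $h(\overline{x})$, conclude that the quadratic term forces $2$-supercoercivity of the proximal objective, and apply Weierstrass to the lsc coercive sum $h+\iota_K+\tfrac{1}{2\beta}\lVert z-\cdot\rVert^{2}$. Your remark that Proposition~\ref{2super:quasar} gives an alternative path for $\gamma>0$ but is unnecessary once $h\geq h(\overline{x})$ is exactly the reason the corollary covers $\gamma\geq 0$ in one stroke.
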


\begin{proof}
 Since $h$ has a minimum on $K$, the sum $h(\cdot) + \frac{1}{2 \beta} \lVert \cdot \rVert^{2}$ is lsc and $2$-super\-coer\-ci\-ve on $K$ for all $\beta> 0$. Hence, ${\rm Prox}_{\beta h} (K, z)$ is nonempty and compact for all $z \in \mathbb{R}^{n}$ and all $\beta > 0$ as a consequence of the Weierstrass Theorem.
\end{proof} 

\begin{remark}\label{not:singleton} 
 The proximity operator of a strongly quasar-convex function is not necessarily a singleton even in one dimension. Indeed, take $K=[-2, 2]$, $\beta = 1$, $z=\frac{3}{2}$ and $h: K \rightarrow \mathbb{R}$ given by $h(x) = \sqrt{\lvert x \rvert}$. Then, it follows from Proposition \ref{f.h1}$(a)$ that $h$ is strongly quasar-convex, but (see \cite[Remark 19$(ii)$]{Lara-9})
 $${\rm Prox}_{h}\left([-2,2], \tfrac{3}{2}\right) = \{0, 1\},$$
i.e., ${\rm Prox}_{h}$ is not necessarily a singleton.
\end{remark}


The following result will be useful in the sequel.

\begin{proposition}\label{sconvex}
 Let $K \subset \mathbb{R}^{n}$ be a closed convex set and $h: \mathbb{R}^{n} \rightarrow \overline{\mathbb{R}}$ be a pro\-per, lsc and $(\kappa, \gamma)$-strongly quasar-convex function with respect to $\overline{x} \in {\rm argmin}_{K}\,h$ ($\kappa \in \, ]0, 1]$ and $\gamma \geq 0$) on $K \subset {\rm dom}\,h$ and $z \in \mathbb{R}^{n}$. If $x^{*} \in {\rm Prox}_{\beta h} (K, z)$, then
 \begin{align}\label{lin:conv1}
  h(x^{*}) - h(\overline{x}) \leq \frac{1}{\kappa \beta} \langle x^{*} - z, \overline{x} - x^{*} \rangle - \frac{\gamma}{2} \lVert x^{*} - \overline{x} \rVert^{2}.
 \end{align}
\end{proposition}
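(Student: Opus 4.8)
The plan is to combine the variational inequality that characterizes $x^{*}$ as a proximal point with the defining inequality of $(\kappa,\gamma)$-strong quasar-convexity, evaluated along the segment from $x^{*}$ toward the minimizer $\overline{x}$, and then pass to the limit as the segment parameter tends to $0$.

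First I would use minimality: since $x^{*} \in {\rm Prox}_{\beta h}(K,z)$, we have $h(x^{*}) + \frac{1}{2\beta}\lVert z - x^{*}\rVert^{2} \leq h(x) + \frac{1}{2\beta}\lVert z - x\rVert^{2}$ for every $x \in K$. Because $\overline{x}, x^{*} \in K$ and $K$ is convex, the point $x_{\lambda} := \lambda\overline{x} + (1-\lambda)x^{*} = x^{*} + \lambda(\overline{x}-x^{*})$ lies in $K$ for all $\lambda \in [0,1]$, so I may take $x = x_{\lambda}$. Next, by Definition \ref{def:quasar} (with this $\lambda$ and with $x = x^{*}$),
\[
 h(x_{\lambda}) \leq \kappa\lambda h(\overline{x}) + (1 - \kappa\lambda)h(x^{*}) - \lambda\Bigl(1 - \tfrac{\lambda}{2-\kappa}\Bigr)\tfrac{\kappa\gamma}{2}\lVert x^{*} - \overline{x}\rVert^{2},
\]
and, expanding, $\lVert z - x_{\lambda}\rVert^{2} = \lVert z - x^{*}\rVert^{2} - 2\lambda\langle z - x^{*}, \overline{x} - x^{*}\rangle + \lambda^{2}\lVert \overline{x} - x^{*}\rVert^{2}$. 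Substituting both into the proximal inequality, the terms $\frac{1}{2\beta}\lVert z - x^{*}\rVert^{2}$ cancel and the coefficient of $h(x^{*})$ simplifies to $\kappa\lambda$, so after dividing by $\kappa\lambda > 0$ (legitimate for $\lambda \in\,]0,1]$) and using $-\langle z - x^{*}, \overline{x} - x^{*}\rangle = \langle x^{*} - z, \overline{x} - x^{*}\rangle$ I obtain
\[
 h(x^{*}) - h(\overline{x}) \leq \tfrac{1}{\kappa\beta}\langle x^{*} - z, \overline{x} - x^{*}\rangle - \Bigl(1 - \tfrac{\lambda}{2-\kappa}\Bigr)\tfrac{\gamma}{2}\lVert x^{*} - \overline{x}\rVert^{2} + \tfrac{\lambda}{2\kappa\beta}\lVert \overline{x} - x^{*}\rVert^{2}.
\]

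Finally, letting $\lambda \downarrow 0$ sends the last two $\lambda$-dependent terms to $-\tfrac{\gamma}{2}\lVert x^{*} - \overline{x}\rVert^{2}$ and $0$ respectively, which gives \eqref{lin:conv1}; the case $x^{*} = \overline{x}$ is trivial. The argument is essentially bookkeeping with \eqref{iden:1}; the only points requiring care are the choice of the feasible test point $x_{\lambda}$ in the direction of $\overline{x}$ (so that strong quasar-convexity can be invoked at all) and the clean passage to the limit $\lambda \downarrow 0$ that discards the $\mathcal{O}(\lambda)$ error terms.
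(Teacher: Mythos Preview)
Your proof is correct and follows essentially the same approach as the paper: test the proximal inequality at $x_{\lambda}=\lambda\overline{x}+(1-\lambda)x^{*}$, bound $h(x_{\lambda})$ via strong quasar-convexity, expand the quadratic term, divide by $\kappa\lambda$, and let $\lambda\downarrow 0$. The only cosmetic difference is that the paper expands $\lVert x_{\lambda}-z\rVert^{2}$ via the identities \eqref{iden:1} and \eqref{3:points} rather than the direct binomial expansion you use, but the resulting inequality before the limit is identical.
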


\begin{proof}
 Since $x^{*} \in {\rm Prox}_{\beta h} (K, z)$, we have
 \begin{align}\label{ppa:dec}
  h(x^{*}) + \frac{1}{2 \beta} \lVert x^{*} - z \rVert^{2} & \leq h(x) + \frac{1}{2 \beta} \lVert x - z \rVert^{2}, ~ \forall ~ x \in K. 
\end{align} 
 Since $h$ is strongly quasar-convex with respect to $\overline{x} \in {\rm argmin}_{K}\,h$ ($\kappa \in \, ]0, 1]$ and $\gamma \geq 0$), taking $x = \lambda \overline{x} + (1 - \lambda) x^{*} \in K$ for all $\lambda \in [0, 1]$, we have
 \begin{align}
  & h(x^{*}) + \frac{1}{2 \beta} \lVert x^{*} - z \rVert^{2} \leq h(\lambda \overline{x} + (1 - \lambda) x^{*}) 
  + \frac{1}{2 \beta} \lVert \lambda 
  (\overline{x} - z) + (1-\lambda)(x^{*} - z) \rVert^{2}, \notag \\
  & \leq \lambda \kappa h(\overline{x}) + (1-\lambda \kappa) h(x^{*}) - \lambda \left(1 - \frac{\lambda}{2-\kappa} \right) \frac{\kappa \gamma}{2} 
  \lVert x^{*} - \overline{x} \rVert^{2} + \frac{\lambda}{2 \beta} \lVert z - \overline{x} \rVert^{2} \notag \\
  & ~~~ +  \frac{(1-\lambda)}{2 \beta} \lVert x^{*} - z \rVert^{2} - \frac{\lambda (1-\lambda)}{2 \beta} \lVert x^{*} - \overline{x} \rVert^{2}, ~ \forall ~ \lambda \in [0, 1], \notag
\end{align} 
in virtue of \eqref{iden:1}. Then, it follows by \eqref{3:points} that
 \begin{align*}
  & ~\, \lambda \kappa (h(x^{*}) - h(\overline{x})) \leq \frac{\lambda}{\beta} \langle x^{*} - z, \overline{x} - x^{*} \rangle + \frac{\lambda \kappa}{2} \left( \frac{\lambda}{\kappa \beta} - \gamma + \frac{\lambda \gamma}{2-\kappa} \right) \lVert x^{*} - \overline{x} \rVert^{2}, 
 \end{align*}
 and the result follows by multiplying by $\frac{1}{\lambda \kappa}$, and then taking $\lambda \downarrow 0$.
\end{proof}

\begin{remark} 
\begin{itemize}
 \item[$(i)$] In contrast to the convex and (strongly) quasiconvex cases (see \cite[Proposition 16.44]{BC-2} and \cite[Proposition 7]{Lara-9}, respectively), relation \eqref{lin:conv1} holds only for $\overline{x} \in {\rm argmin}_{K}\,h$. Surprisingly, as we will see in the next section, the information provided by \eqref{lin:conv1} is enough to guarantee (linear) convergence to the optimal solution of a (strongly) quasar-convex function and to provide a detailed complexity analysis.
 
 \item[$(ii)$] Recall that if $K \subset \mathbb{R}^{n}$ is a closed set and $h: \mathbb{R}^{n} \rightarrow \overline{\mathbb{R}}$ is a proper function with $K \cap {\rm dom}\,h \neq \emptyset$, then $h$ is said to be prox-convex on $K$ (see \cite{GL2,GL1}) if there exists $\alpha > 0$ such that for every $z \in K$, ${\rm Prox}_{h} (K, z) \neq \emptyset$, and
 \begin{equation}\label{prox:all}
  x^{*} \in {\rm Prox}_{h} (K, z) ~ \Longrightarrow ~ h(x^{*}) - h(x) \leq \alpha \langle x^{*} - z, x - x^{*} \rangle, ~ \forall ~ x \in K.
 \end{equation}
 It follows from Proposition \ref{sconvex} that prox-convex functions with modulus $\alpha \geq 1$ are quasar-convex with modulus $\kappa = \frac{1}{\alpha}$ and $\gamma = 0$.

 \item[$(iii)$] Observe that the function in Example \ref{lp:func} is strongly quasar-convex on boun\-ded convex sets while being neither weakly convex, nor DC, nor strongly quasiconvex, i.e., the results from \cite{AN,ABS,BB,CP,IPS,Lara-9,PEN,SSC} and re\-fe\-ren\-ces therein cannot be applied to this function.
 \end{itemize}
\end{remark}

The following result provides a connection between fixed points of the pro\-xi\-mi\-ty operator and minimal points of quasar-convex functions. We emphasize that function $h$ does not need to be quasar-convex with respect to every point in ${\rm argmin}_{K}\,h$, only with respect to one point $\overline{x} \in {\rm argmin}_{K}\,h$ is enough.

\begin{proposition}\label{prop:fixed}
 Let $K \subset \mathbb{R}^{n}$ be a closed convex set and $h: \mathbb{R}^{n} \rightarrow \overline{\mathbb{R}}$ be a pro\-per, lsc and $(\kappa, \gamma)$-strongly quasar-convex function with respect to $ \overline{x}\in{\rm argmin}_{K}\,h$ ($\kappa \in \, ]0, 1]$ and $\gamma \geq 0$) on $K \subset {\rm dom}\,h$, and $\beta>0$. Then
 \begin{equation}\label{fixed:points}
 {\rm Fix} \left( {\rm Prox}_{\beta h} (K, \cdot) \right) = {\rm argmin}_{K}\,h.
\end{equation}
\end{proposition}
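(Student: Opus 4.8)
The plan is to establish the two inclusions in \eqref{fixed:points} separately; the nontrivial one is an immediate consequence of Proposition \ref{sconvex}.

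\textbf{Inclusion ${\rm argmin}_{K}\,h \subseteq {\rm Fix}({\rm Prox}_{\beta h}(K,\cdot))$.} I take any $\overline{x} \in {\rm argmin}_{K}\,h$ and note that for every $x \in K$,
$$
h(\overline{x}) + \frac{1}{2\beta}\lVert \overline{x} - \overline{x}\rVert^{2} = h(\overline{x}) \leq h(x) \leq h(x) + \frac{1}{2\beta}\lVert x - \overline{x}\rVert^{2}.
$$
Hence $\overline{x}$ minimizes $h(\cdot) + \frac{1}{2\beta}\lVert \cdot - \overline{x}\rVert^{2}$ over $K$, i.e. $\overline{x} \in {\rm Prox}_{\beta h}(K,\overline{x})$, so $\overline{x}$ is a fixed point. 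This direction uses only the definition of the proximity operator (and holds whether $\gamma=0$ or $\gamma>0$).

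\textbf{Inclusion ${\rm Fix}({\rm Prox}_{\beta h}(K,\cdot)) \subseteq {\rm argmin}_{K}\,h$.} Let $x^{*} \in {\rm Fix}({\rm Prox}_{\beta h}(K,\cdot))$, so that $x^{*} \in {\rm Prox}_{\beta h}(K,x^{*}) \subseteq K$; note this set is nonempty by Corollary \ref{exist:iter}, and ${\rm argmin}_{K}\,h \neq \emptyset$ by hypothesis. Fix $\overline{x} \in {\rm argmin}_{K}\,h$ and apply Proposition \ref{sconvex} with $z = x^{*}$: the inner-product term $\frac{1}{\kappa\beta}\langle x^{*} - z,\, \overline{x} - x^{*}\rangle$ vanishes, leaving
$$
h(x^{*}) - h(\overline{x}) \leq -\frac{\gamma}{2}\lVert x^{*} - \overline{x}\rVert^{2} \leq 0.
$$
Since $x^{*} \in K$ we also have $h(x^{*}) \geq h(\overline{x}) = \min_{K} h$, whence $h(x^{*}) = \min_{K} h$ and $x^{*} \in {\rm argmin}_{K}\,h$.

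The only step that draws on the theory developed so far is the substitution $z = x^{*}$ in the variational-type inequality \eqref{lin:conv1} of Proposition \ref{sconvex}; beyond that the argument is elementary, and no genuine obstacle arises — in particular the reasoning is uniform in the parameter $\gamma \geq 0$.
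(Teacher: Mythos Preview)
Your proof is correct and follows the same underlying idea as the paper's; the only difference is that you invoke Proposition~\ref{sconvex} directly with $z=x^{*}$, whereas the paper re-derives that inequality from scratch (inserting $x=\lambda\overline{x}+(1-\lambda)x^{*}$ into the prox inequality, expanding via quasar-convexity, and letting $\lambda\downarrow 0$). Your presentation is in fact tidier, since the limiting argument has already been carried out in the proof of Proposition~\ref{sconvex}.
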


\begin{proof}
 $(\supset)$ Straightforward. 
 
 $(\subset)$ Let $x^{*} \in {\rm Prox}_{\beta h} (K, x^{*})$. Then, $h(x^{*}) \leq h(x) + \frac{1}{2 \beta} \lVert x^{*} - x \rVert^{2}$ for all $x \in K$. Take $x = \lambda \overline{x} + (1-\lambda) x^{*} \in K$ with $\lambda \in [0, 1]$ and $\overline{x} \in {\rm argmin}_{K}\,h$. Since $h$ is $(\kappa,\gamma)$-strongly quasar-convex on $K$ with respect to $\overline{x}$, we have
\begin{align*}
 & h(x^{*}) \leq h(\lambda \overline{x} + (1-\lambda) x^{*}) + \frac{1}{2 \beta} \lVert \lambda (\overline{x} - x^{*} ) \rVert^{2} \\
 & \hspace{0.85cm} \leq \lambda \kappa h(\overline{x}) + (1-\lambda \kappa) h(x^{*}) - \lambda \left( 1 - \frac{\lambda}{2-\kappa} \right) \frac{\kappa \gamma}{2} \lVert x^{*} - \overline{x} \rVert^{2} + \frac{\lambda^{2}}{2 \beta} \lVert x^{*} - \overline{x} \rVert^{2}. 
\end{align*}
Then
 \begin{align*} 
  & h(x^{*}) - h(\overline{x}) \leq \frac{1}{2} \left( \frac{\lambda}{\kappa \beta} - \gamma + \frac{\lambda \gamma}{2-\kappa} \right) \lVert x^{*} - \overline{x} \rVert^{2}, ~ \forall ~ \lambda \in \, ]0, 1]. 
\end{align*}
Taking $\lambda \downarrow 0$, we obtain $h(x^{*}) \leq h(\overline{x})$, thus $x^{*} \in {\rm argmin}_{K}\,h$. 
\end{proof}

The previous result extends \cite[Proposition 12.29]{BC-2} from the convex to the quasar-convex case.

\section{The Proximal Point Algorithm}\label{sec:04}

Motivated by the lack of algorithms for nonsmooth quasar-convex functions, we develop a proximal point algorithm that builds directly on the foundational properties of proximity operators established in the previous section.

The Proximal Point Algorithm (PPA) that we consider is based on the cla\-ssi\-cal version of Rockafellar \cite{rock-1976} (see also \cite{M1,M2}).

Throughout this section, let $K \subset \mathbb{R}^{n}$ be a nonempty closed convex set and $h:\mathbb{R}^{n}\to\overline{\mathbb{R}}$ be a proper and lsc function such that $K \subset {\rm dom}\,h$.

\begin{algorithm}[H]
\caption{PPA for Quasar-Convex Functions (PPA-Quasar)}\label{ppa:sqcx}
\begin{description}
 \item[Step 0.] Choose $x^{0} \in K$, set $k=0$, and let $\{\beta_{k}\}_{k}$ be a sequence of positive numbers.

 \item[Step 1.] Choose
 \begin{equation}\label{step:sqcx}
  x^{k+1} \in {\rm Prox}_{\beta_{k} h} (K, x^{k}).
 \end{equation}

 \item[Step 2.] If $x^{k+1} = x^{k}$, then STOP, $x^{k} \in {\rm argmin}_{K}\,h$. Otherwise, set $k=k+1$ and go to Step 1.
 \end{description}
\end{algorithm}

Note that, under the quasar-convexity assumptions considered in this paper, the iterative steps are well defined by Corollary \ref{exist:iter}, while the stopping criterion follows from Proposition \ref{prop:fixed}.

\subsection{Convergence Analysis}

We proceed directly with the convergence result.

\begin{theorem}\label{usual:prox}
Let $K \subset \mathbb{R}^{n}$ be a closed convex set, and let $h: \mathbb{R}^{n} \rightarrow \overline{\mathbb{R}}$ be a proper and lsc function such that $K \subset {\rm dom}\,h$. Assume that $h$ is strongly quasar-convex on $K$ with modulus $\kappa \in \, ]0, 1]$ and $\gamma > 0$. Suppose that there exists $\beta^{\prime} > 0$ such that
\begin{equation}\label{param:cond1}
 0 < \beta^{\prime} \leq \beta_{k}, ~ \forall ~ k \in \mathbb{N}_{0}.
\end{equation}
Then the sequence $\{x^{k}\}_{k}$, generated by Algorithm \ref{ppa:sqcx}, is a minimizing sequence of $h$ on $K$, i.e., $h(x^{k}) \downarrow \min_{x \in K}\,h(x)$ and, moreover, converges linearly to the unique solution $\overline{x} \in {\rm argmin}_{K}\,h$ with convergence rate of at least 
\begin{equation}\label{conv:rate}
\frac{1}{\sqrt{1 + \kappa \beta^{\prime} \gamma + (\frac{\kappa^{2} \beta^{\prime} \gamma}{2 - \kappa})}} \, \in ~ ]0, 1[. 
\end{equation}
\end{theorem}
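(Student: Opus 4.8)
The plan is to exploit the key inequality \eqref{lin:conv1} from Proposition \ref{sconvex} with $K = \mathbb{R}^n$, $z = x^k$, and $x^* = x^{k+1}$, namely
\begin{equation}\label{plan:start}
 h(x^{k+1}) - h(\overline{x}) \leq \frac{1}{\kappa \beta_k} \langle x^{k+1} - x^k, \overline{x} - x^{k+1} \rangle - \frac{\gamma}{2} \lVert x^{k+1} - \overline{x} \rVert^2.
\end{equation}
First I would apply the three-point identity \eqref{3:points} with $x = x^{k+1}$, $z = x^k$, $y = \overline{x}$ to rewrite the inner product term as $\frac{1}{2}\lVert x^k - \overline{x}\rVert^2 - \frac{1}{2}\lVert x^{k+1} - x^k\rVert^2 - \frac{1}{2}\lVert \overline{x} - x^{k+1}\rVert^2$. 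Since $h(x^{k+1}) - h(\overline{x}) \geq 0$, this already yields that the left-hand side of \eqref{plan:start} is nonnegative and that the resulting inequality controls $\lVert x^{k+1} - \overline{x}\rVert^2$ in terms of $\lVert x^k - \overline{x}\rVert^2$: after dropping the nonnegative terms $\frac{1}{2\kappa\beta_k}\lVert x^{k+1}-x^k\rVert^2$ and $h(x^{k+1})-h(\overline{x})$ appropriately, collecting the $\lVert x^{k+1}-\overline{x}\rVert^2$ coefficients gives
\begin{equation}\label{plan:contraction}
 \left(1 + \kappa\beta_k\gamma + \frac{\kappa^2\beta_k\gamma}{2-\kappa}\right)\lVert x^{k+1} - \overline{x}\rVert^2 \leq \lVert x^k - \overline{x}\rVert^2,
\end{equation}
where I should double-check the exact bookkeeping of the $\frac{1}{2}$ and $\frac{1}{\kappa}$ factors so the coefficient matches \eqref{conv:rate}. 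Using \eqref{param:cond1} to replace $\beta_k$ by $\beta'$ in the coefficient (it only decreases the contraction factor, which is what we want), iterating gives $\lVert x^k - \overline{x}\rVert \leq c^k \lVert x^0 - \overline{x}\rVert$ with $c$ the constant in \eqref{conv:rate}, which is in $]0,1[$ since $\kappa\beta'\gamma > 0$. This is linear convergence to $\overline{x}$, and $\overline{x}$ is the unique minimizer by \eqref{qwc:sconvex} (or by the remark after Definition \ref{def:quasar}).

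Next I would establish that $\{x^k\}_k$ is minimizing and monotone. Monotonicity $h(x^{k+1}) \leq h(x^k)$ follows directly from the definition of the proximity operator: plugging $x = x^k$ into \eqref{ppa:dec} gives $h(x^{k+1}) + \frac{1}{2\beta_k}\lVert x^{k+1}-x^k\rVert^2 \leq h(x^k)$, hence $h(x^{k+1}) \leq h(x^k)$. To see that $h(x^k) \downarrow \min h$, I would return to \eqref{plan:start}: since the inner product term is bounded by $\frac{1}{2\kappa\beta_k}\lVert x^k-\overline{x}\rVert^2$ (discarding the other negative pieces), we get $h(x^k) - h(\overline{x}) \leq \frac{1}{2\kappa\beta_k}\lVert x^{k-1}-\overline{x}\rVert^2 \leq \frac{1}{2\kappa\beta'}\lVert x^{k-1}-\overline{x}\rVert^2 \to 0$ by the linear convergence of $\{x^k\}_k$ already proved. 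Combined with monotonicity, $h(x^k) \downarrow \min_{\mathbb{R}^n} h$.

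The main obstacle I anticipate is purely the careful algebra in passing from \eqref{plan:start} to \eqref{plan:contraction}: one must correctly combine the $-\frac{\gamma}{2}\lVert x^{k+1}-\overline{x}\rVert^2$ term with the $-\frac{1}{2\kappa\beta_k}\lVert x^{k+1}-\overline{x}\rVert^2$ coming from \eqref{3:points}, move everything to isolate $\lVert x^{k+1}-\overline{x}\rVert^2$, and verify that the coefficient is exactly $1 + \kappa\beta_k\gamma + \frac{\kappa^2\beta_k\gamma}{2-\kappa}$ after multiplying through by $2\kappa\beta_k$ — in particular tracking where the extra factor of $\kappa$ and the $\frac{\kappa\gamma}{2-\kappa}$ term arise (the latter presumably by not discarding a $+\frac{\lambda\gamma}{2-\kappa}$-type contribution but instead keeping the full strong quasar-convexity inequality before the $\lambda\downarrow 0$ limit, or equivalently by a sharper use of \eqref{lin:conv1} that retains that term). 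A secondary point is to confirm that the contraction factor is monotone in $\beta_k$ so that \eqref{param:cond1} legitimately lets us use the uniform bound $\beta'$; this is immediate since the denominator under the square root is increasing in $\beta_k$.
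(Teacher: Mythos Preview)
Your overall strategy matches the paper's proof: start from Proposition~\ref{sconvex}, rewrite the inner product via the three-point identity~\eqref{3:points}, multiply through by $2\kappa\beta_k$, and arrive at the intermediate inequality
\[
(1+\kappa\beta_k\gamma)\lVert x^{k+1}-\overline{x}\rVert^2 \;\le\; \lVert x^k-\overline{x}\rVert^2 - \lVert x^{k+1}-x^k\rVert^2 - 2\kappa\beta_k\bigl(h(x^{k+1})-h(\overline{x})\bigr),
\]
which is exactly the paper's~\eqref{lin:conv}. Your handling of monotonicity and of $h(x^k)\downarrow\min h$ is also fine.

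The one real gap is your speculation about the origin of the extra $\tfrac{\kappa^2\beta'\gamma}{2-\kappa}$ in the contraction factor. It does \emph{not} come from retaining the $\tfrac{\lambda\gamma}{2-\kappa}$ contribution in the proof of Proposition~\ref{sconvex} before letting $\lambda\downarrow 0$: that term enters with the wrong sign (it sits as $+\tfrac{\lambda\gamma}{2-\kappa}$ on the right-hand side), so keeping it only weakens the inequality. Nor is a sharper variant of~\eqref{lin:conv1} needed. What the paper actually does is refrain from discarding the term $-2\kappa\beta_k(h(x^{k+1})-h(\overline{x}))$ and instead bound it using the quadratic growth condition~\eqref{qwc:sconvex}, namely $h(x^{k+1})-h(\overline{x})\ge \tfrac{\kappa\gamma}{2(2-\kappa)}\lVert x^{k+1}-\overline{x}\rVert^2$. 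Plugging this in yields precisely the additional $\tfrac{\kappa^2\beta_k\gamma}{2-\kappa}\lVert x^{k+1}-\overline{x}\rVert^2$ on the left, giving~\eqref{plan:contraction} and hence the rate~\eqref{conv:rate}. Once you make this substitution, the rest of your argument goes through verbatim.
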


\begin{proof}
If $x^{k+1} = x^{k}$ for some $k \in \mathbb{N}_{0}$, then the algorithm stops and $x^{k} \in {\rm argmin}_{K}\,h$ by Proposition \ref{prop:fixed}. So, we assume that $x^{k+1} \neq x^{k}$ for all $k \in \mathbb{N}_{0}$.

Since $x^{k+1} \in {\rm Prox}_{\beta_{k} h} 
(K, x^{k})$, taking $z=x^{k}$, $x^{*}=x^{k+1}$ and $x=x^{k}$ in \eqref{ppa:dec}, and since we are assuming $x^{k+1} \neq x^{k}$, we obtain
\begin{align}\label{tel:one}
 h(x^{k+1}) < h(x^{k+1}) + \frac{1}{2 \beta_{k}} \lVert x^{k+1} - x^{k} \Vert^{2} \leq h(x^{k}).
\end{align}
Hence, $\{h(x^{k})\}_{k}$ is a decreasing sequence.

Now, let us prove that the sequence $\{x^{k}\}_{k}$ converges to the unique solution. Indeed, since $h$ is strongly quasar-convex and $x^{k+1} \in {\rm Prox}_{\beta_{k} h} (K,x^{k})$, it follows from  Proposition \ref{sconvex} with $\overline{x} \in {\rm argmin}_{K}\,h$ that
 \begin{align} \label{alpha:subd}
  & 2 \langle x^{k+1} - x^{k}, x^{k+1} - \overline{x} \rangle \leq 2 \kappa \beta_{k} (h(\overline{x}) - h(x^{k+1})) - \kappa \beta_{k} \gamma \lVert x^{k+1} - \overline{x} \rVert^{2}.
 \end{align}
 On the other hand, 
 \begin{align}
  \lVert x^{k+1} - \overline{x} \rVert^{2} & = \lVert x^{k+1} - x^{k} + x^{k}
  - \overline{x} \rVert^{2} \notag \\
  & = \lVert x^{k+1} - x^{k} \rVert^{2} + \lVert x^{k} - \overline{x} \rVert^{2}
  + 2\langle x^{k+1} - x^{k}, x^{k} - \overline{x} \rangle \notag \\
  & = \lVert x^{k} - \overline{x} \rVert^{2} -\lVert x^{k+1} - x^{k} \rVert^{2} + 2 \langle x^{k+1} - x^{k}, x^{k+1} - \overline{x} \rangle. \notag 
 \end{align}
Then, using \eqref{alpha:subd}, 
\begin{equation}\label{lin:conv}
 (1 + \kappa \beta_{k} \gamma) \lVert x^{k+1} - \overline{x} \rVert^{2} \leq \lVert x^{k} - \overline{x} \rVert^{2} -\lVert x^{k+1} - x^{k} \rVert^{2} + 2 \kappa \beta_{k} (h(\overline{x}) - h(x^{k+1})).
\end{equation}
In particular,
\begin{equation}\label{for:values}
 2 \kappa \beta' (h(x^{k+1}) - h(\overline{x})) \leq \lVert x^k - \overline{x} \rVert^2-\Vert x^{k+1}-\overline{x}\Vert^2.
\end{equation} 
Summing this inequality over $k$,
$$
\sum_{k=0}^{\ell} 2 \kappa \beta' (h(x^{k+1}) - h(\overline{x})) \leq \lVert x^{0} - \overline{x} \rVert^{2}-\lVert x^{\ell + 1} - \overline{x} \rVert^{2}, ~ \forall ~ \ell \in \mathbb{N}.
$$
Then the series $\sum_{k=0}^{\infty} (h(x^k)-h(\overline{x}))$ converges and, in particular, $\lim_{k\to \infty} h(x^k) = h(\overline{x}) = \min_{x \in K} h(x)$, i.e, $\{x^{k}\}_{k}$ is a minimizing sequence of $h$ on $K$.

\noindent Using now \eqref{qwc:sconvex} (with $y=x^{k+1}$) in \eqref{lin:conv}, we have
\begin{align}
 & (1 + \kappa \beta_{k} \gamma) \lVert x^{k+1} - \overline{x} \rVert^{2} \leq \lVert x^{k} - \overline{x} \rVert^{2} -\lVert x^{k+1} - x^{k} \rVert^{2} -  \frac{\kappa^{2} \beta_{k} \gamma}{2 - \kappa} \lVert x^{k+1} - \overline{x} \rVert^{2} \notag \\
 & \Longleftrightarrow \, \left( 1 + \kappa \beta_{k} \gamma + \frac{ \kappa^{2} \beta_{k} \gamma}{2 - \kappa} \right) \lVert x^{k+1} - \overline{x} \rVert^{2} \leq \lVert x^{k} - \overline{x} \rVert^{2} -\lVert x^{k+1} - x^{k} \rVert^{2} \notag \\
 & \, \overset{\eqref{param:cond1}}{\Longrightarrow} \, \lVert x^{k+1} - \overline{x} \rVert^{2} \leq \frac{1}{1 + \kappa \beta^{\prime} \gamma + (\frac{\kappa^{2} \beta^{\prime} \gamma}{2 - \kappa})} \lVert x^{k} - \overline{x} \rVert^{2}.
 \label{conv:rate1}
\end{align}
Hence, $\{x^{k}\}_{k}$ converges to the unique solution $\overline{x} \in {\rm argmin}_{K}\,h$, and using \eqref{tel:one} we conclude that it is a minimizing sequence, that is, $h(x^{k}) \downarrow \min_{x \in K}\,h(x)$. 

Finally, it follows from \eqref{conv:rate1} that $\{x^{k}\}_{k}$ converges linearly to $\overline{x} \in {\rm argmin}_{K}\,h$ with a convergence rate of at least $\frac{1}{\sqrt{1 + \kappa \beta^{\prime} \gamma + (\frac{ \kappa^{2} \beta^{\prime} \gamma}{2 - \kappa})}} \in \, ]0, 1[$. 
\end{proof}

If in particular the function $h$ is strongly star-convex on $K$ ($\kappa = 1$), we obtain:

\begin{corollary}\label{coro:starconvex}
 Under the assumptions of Theorem \ref{usual:prox}, if the function $h$ is strong\-ly star-convex ($\kappa = 1$, $\gamma > 0$), then the sequence $\{x^{k}\}_{k}$, generated by Algorithm \ref{ppa:sqcx}, is a minimizing sequence of $h$ and converges linearly to the unique solution $\overline{x} \in {\rm argmin}_{K}\,h$ with a convergence rate of at least 
\begin{equation}\label{conv:rate2}
  \frac{1}{\sqrt{1 + 2 \beta^{\prime} \gamma}} \, \in ~ ]0, 1[.
\end{equation}
\end{corollary}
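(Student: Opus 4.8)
The plan is to derive the corollary directly from Theorem~\ref{usual:prox} by specializing to $\kappa = 1$. First I would recall, from the discussion following Definition~\ref{def:quasar}, that a function is strongly star-convex exactly when it is $(1,\gamma)$-strongly quasar-convex with $\gamma > 0$; consequently the hypotheses of Theorem~\ref{usual:prox} hold with $\kappa = 1$, and the parameter requirement \eqref{param:cond1} is unchanged. Theorem~\ref{usual:prox} then already delivers everything qualitative: the sequence $\{x^{k}\}_{k}$ generated by Algorithm~\ref{ppa:sqcx} is minimizing, i.e.\ $h(x^{k}) \downarrow \min_{x \in \mathbb{R}^{n}} h(x)$, it converges linearly to the unique minimizer $\overline{x} \in {\rm argmin}_{\mathbb{R}^{n}}\,h$, and the contraction factor is bounded by the quantity appearing in \eqref{conv:rate}.

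The second step is the only computation: evaluate $\frac{1}{\sqrt{1 + \kappa \beta^{\prime} \gamma + \frac{\kappa^{2} \beta^{\prime} \gamma}{2 - \kappa}}}$ at $\kappa = 1$. Here $2 - \kappa = 1$, so the expression under the root collapses to $1 + \beta^{\prime}\gamma + \beta^{\prime}\gamma = 1 + 2\beta^{\prime}\gamma$, yielding the announced rate $\frac{1}{\sqrt{1 + 2\beta^{\prime}\gamma}} \in \,]0,1[$ stated in \eqref{conv:rate2}. It is worth noting in passing that $\kappa = 1$ introduces no degeneracy in the argument of Theorem~\ref{usual:prox}: the denominator $2-\kappa$ stays positive, the quadratic-growth inequality \eqref{qwc:sconvex} becomes $h(\overline{x}) + \frac{\gamma}{2}\lVert y-\overline{x}\rVert^{2} \leq h(y)$, and the coefficient $\lambda\bigl(1 - \frac{\lambda}{2-\kappa}\bigr)$ in Definition~\ref{def:quasar} reduces to $\lambda(1-\lambda) \geq 0$. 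Hence every inequality invoked in the proof of the theorem — the estimate \eqref{alpha:subd} coming from Proposition~\ref{sconvex}, the expansion of $\lVert x^{k+1}-\overline{x}\rVert^{2}$ via $x^{k+1}-\overline{x} = (x^{k+1}-x^{k}) + (x^{k}-\overline{x})$, and the subsequent substitution of \eqref{qwc:sconvex} — remains valid verbatim with $\kappa = 1$.

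I do not anticipate any genuine obstacle: the corollary is a routine specialization whose purpose is to record the particularly clean rate $\frac{1}{\sqrt{1+2\beta^{\prime}\gamma}}$ available in the strongly star-convex case, and its proof can essentially read ``Direct consequence of Theorem~\ref{usual:prox} with $\kappa = 1$.'' If one wanted, the minimizing and linear-convergence claims could alternatively be re-derived from scratch by repeating the chain \eqref{tel:one}--\eqref{conv:rate1} with $\kappa = 1$ substituted throughout, but this would only reproduce the same steps with lighter notation.
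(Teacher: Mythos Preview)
Your proposal is correct and takes exactly the approach the paper intends: the corollary is stated without proof precisely because it is the specialization $\kappa = 1$ of Theorem~\ref{usual:prox}, and your computation $1 + \kappa\beta'\gamma + \frac{\kappa^{2}\beta'\gamma}{2-\kappa}\big|_{\kappa=1} = 1 + 2\beta'\gamma$ is the only step needed.
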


\begin{remark}\label{rem:compa1}
 \begin{itemize}
  \item[$(i)$] Observe that in relation \eqref{param:cond1}, the parameter $\beta^{\prime} > 0$ can be chosen arbitrarily large, just as in the strongly convex case. This suggests that the convergence rate in \eqref{conv:rate} and \eqref{conv:rate2} could be made arbitrarily fast. However, if $\beta^{\prime} > 0$ is chosen too large, solving the subproblem \eqref{step:sqcx} becomes as expensive as solving the original problem $\min_{x \in K}\,h(x)$.

  \item[$(ii)$] The linear convergence result in Theorem \ref{usual:prox} is a proper extension since when $\kappa = 1$ in \eqref{conv:rate}, the linear rate is exactly as good as the rate for strongly convex minimization (see, for instance, \cite[Example 23.40$(ii)$]{BC-2}). 
 \end{itemize}
\end{remark}

In the case when $\gamma=0$, i.e., $h$ is quasar-convex, we can also ensure that the generated sequence is a minimizing sequence of $h$ on $K$.

\begin{theorem}\label{case gamma=0}
Let $K \subset \mathbb{R}^{n}$ be a closed convex set, and let $h: \mathbb{R}^{n} \rightarrow \overline{\mathbb{R}}$ be a proper and lsc function such that $K \subset {\rm dom}\,h$. Suppose that $h$ is $\kappa$-quasar-convex on $K$ with respect to every point of ${\rm argmin}_{K}\,h$, for some $\kappa \in \, ]0,1]$, and that condition \eqref{param:cond1} holds. Then the se\-quen\-ce $\{x^{k}\}_{k}$, generated by Algorithm \ref{ppa:sqcx}, is a minimizing sequence of $h$ on $K$, i.e., $h(x^{k}) \downarrow \min_{x \in K}\,h(x)$.
\end{theorem}
\begin{proof}
 Since $h$ is quasar-convex ($\gamma = 0$) and $x^{k+1} \in {\rm Prox}_{\beta_{k} h} (K,x^{k})$, it follows from \eqref{tel:one} that $\{h(x^{k})\}_{k}$ is decreasing. Also, by repeating the proof of Theorem \ref{usual:prox} with $\overline{x} \in {\rm argmin}_{K}\,h$ and $\gamma = 0$, it follows from \eqref{lin:conv} that
 \begin{align}\label{fejer}
  \lVert x^{k+1} - \overline{x} \rVert^{2} & \leq \lVert x^{k} - \overline{x} \rVert^{2}  +  2\kappa \beta_{k} (h(\overline{x}) - h(x^{k+1})) \notag \\
  & \leq \lVert x^{k} - \overline{x} \rVert^{2}  +  2 \kappa \beta^{\prime} (h(\overline{x}) - h(x^{k+1})).
 \end{align} 
Hence, the sequence $\{x^k\}_{k}$ is Fej\'er monotone with res\-pect to ${\rm argmin}_{K}\,h$ (see \cite[Definition 5.1]{BC-2}) and $2 \kappa \beta' \sum_{k=0}^{\infty} (h(x^{k+1})-h(\overline{x}))$ converges (by the same argument than in \eqref{for:values}). In particular, $\lim_{k\to \infty} h(x^k) = h(\overline{x}) = \min_{x \in K} h(x)$.

Since the sequence $\{\lVert x^k-\overline{x}\rVert^2\}_{k}$ is monotonically decreasing and thus boun\-ded, the sequence $\{x^k\}_{k}$ possesses a convergent subsequence $\{x^{j_k}\}_{k}$ with limit point $\hat{x}$. By the lower semicontinuity of $h$, we obtain
\begin{equation}
h(\hat{x}) \leq \liminf_{k\to\infty} h(x^{j_k}) = \lim_{k\to\infty} h(x^k) = \min_{x\in K} h(x).
\end{equation}
Consequently, $\hat{x}\in {\rm argmin}_{K}\,h$.

Thus, the sequence $\{x^{k}\}_{k}$ is Fejér monotone with respect to $ {\rm argmin}_{K}\,h$ and has at least one cluster point belonging to ${\rm argmin}_{K}\,h$. Applying \cite[Theorem 5.5]{BC-2}, we conclude that $\{x^{k}\}_{k}$ converges to some point in ${\rm argmin}_{K}\,h$.
\end{proof}

\begin{remark}
 \begin{itemize}
  \item[$(i)$] In the proof of the Theorem \ref{case gamma=0}, the conclusion that $\{x^k\}_{k}$ is a minimizing sequence, i.e., $h(x^k) \to \min_K h$, actually requires the quasar-convexity condition with respect to a single point $\overline{x} \in {\rm argmin}_K\,h$. The stronger assumption that the function is quasar-convex with respect to every point $\overline{x} \in {\rm argmin}_K\,h$ is needed in the subsequent argument for establishing convergence of $\{x^k\}_{k}$ to a minimizer.

  \item[$(ii)$] We note that there exists functions which are quasar-convex with respect to some minimizers and they are not quasar-convex with respect to other minimizers. Indeed, consider the function $h:\mathbb{R}^2\to\mathbb{R}$ given by $h(x,y) = x^2(1+y^2)$. Then ${\rm argmin}_{ \mathbb{R}^2}\,h =\{(0,t): t \in \mathbb{R}\}$ and $\min h=0$. 
  
  Observe that for $(\overline{x}, \overline{y}) =(0,t) \in{\rm argmin}_{\mathbb{R}^2}\,h$ and $\kappa\in(0,1]$, the function $h$ is $\kappa$-quasar-convex with respect to $(\overline{x}, \overline{y}) = (0,t)$ if and only if the inequality (see \eqref{diff:squasar}) 
  $$\langle \nabla h(x,y), (x,y)-(0,t) \rangle \ge \kappa h(x,y),$$
   holds for every $(x,y) \in \mathbb{R}^2$, that is, $2x^2(1+2y^2-ty) \ge \kappa x^2(1+y^2)$. 
   For $x\neq0$, we obtain 
   $$(4-\kappa) y^2 - 2ty + (2-\kappa) \ge0, ~ \forall ~ y \in \mathbb{R},$$ 
   which holds if and only if $|t| \le \sqrt{(4-\kappa) (2-\kappa)}$. Therefore, $h$ is $\kappa$-quasar-convex with respect to $(\overline{x}, \overline{y}) = (0,t)$ if $|t| \le \sqrt{(4-\kappa) (2-\kappa)}$, but for $|t| \ge 2 \sqrt2$, $h$ is not $\kappa$-quasar-convex with respect to $(\overline{x}, \overline{y}) = (0, t)$ for any choice of $\kappa \in \, ]0, 1]$. 
 \end{itemize} 
\end{remark}

In particular, for the star-convex case we have:

\begin{corollary}\label{starconvex}
 Under the assumptions of Theorem \ref{case gamma=0}. If the function $h$ is star-convex ($\kappa = 1$, $\gamma = 0$), then the sequence $\{x^{k}\}_{k}$, generated by Algorithm \ref{ppa:sqcx}, is a minimizing sequence of $h$. 
\end{corollary}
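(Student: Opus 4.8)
The plan is to obtain Corollary \ref{starconvex} as an immediate specialization of Theorem \ref{case gamma=0} to the case $\kappa = 1$. First I would recall, from the discussion following Definition \ref{def:quasar} (and from the scheme \eqref{scheme}), that when $\kappa = 1$ the notion of $\kappa$-quasar-convexity coincides with star-convexity; hence a star-convex function $h$ (with ${\rm argmin}_{\mathbb{R}^{n}}\,h \neq \emptyset$) is precisely a $1$-quasar-convex function with respect to ${\rm argmin}_{\mathbb{R}^{n}}\,h$ and modulus $\gamma = 0$. In particular, every structural hypothesis needed to invoke Theorem \ref{case gamma=0} is in force: $h$ is proper, lsc, and $\kappa$-quasar-convex with $\kappa = 1$ and $\gamma = 0$, and the stepsize condition \eqref{param:cond1} holds by assumption.

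Next I would simply apply Theorem \ref{case gamma=0} with $\kappa = 1$. This yields at once that the sequence $\{x^{k}\}_{k}$ generated by Algorithm \ref{ppa:sqcx} satisfies $h(x^{k}) \downarrow \min_{x \in \mathbb{R}^{n}}\,h(x)$, i.e.\ is a minimizing sequence for problem \eqref{main:p}; as a byproduct one also recovers Fej\'er monotonicity of $\{x^{k}\}_{k}$ with respect to ${\rm argmin}_{\mathbb{R}^{n}}\,h$ and its convergence to a point of ${\rm argmin}_{\mathbb{R}^{n}}\,h$. (Strictly speaking, the phrase ``under the assumptions of Theorem \ref{usual:prox}'' should be read as ``under the assumptions of Theorem \ref{case gamma=0},'' since the case $\gamma = 0$ lies outside the scope of Theorem \ref{usual:prox}; the intended hypotheses are exactly those of Theorem \ref{case gamma=0}.)

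There is essentially no obstacle here: the only point requiring any verification is the identification $\kappa = 1 \Leftrightarrow$ star-convex, which is already recorded in Section \ref{sec:02}, after which the statement is a verbatim instance of Theorem \ref{case gamma=0}. Consequently the proof will be one or two lines long, consisting of this reduction together with the citation of Theorem \ref{case gamma=0}.
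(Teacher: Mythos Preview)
Your proposal is correct and matches the paper's approach exactly: Corollary \ref{starconvex} is stated immediately after Theorem \ref{case gamma=0} with no proof, so it is intended as the direct specialization $\kappa = 1$ that you describe. Your parenthetical remark that the phrase ``under the assumptions of Theorem \ref{usual:prox}'' should really point to Theorem \ref{case gamma=0} is also apt, since $\gamma = 0$ is outside the scope of Theorem \ref{usual:prox}.
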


\subsection{Complexity Analysis}

In the following result, we present non-asymptotic estimates for both functional values $h(x^{k}) - \min_{x\in K} h(x)$ and the distance $\lVert x^{k} - \overline{x} \rVert$ when the objective function is strongly quasar-convex with $\gamma > 0$.

\begin{proposition}\label{prop:image}
 Suppose that the assumptions of Theorem \ref{usual:prox} hold and $\overline{x} \in {\rm argmin}_{K}\,h$. Let $\{x^k\}_{k}$ be the sequence generated by Algorithm \ref{ppa:sqcx}. Then, given a to\-le\-ran\-ce $\varepsilon > 0$, the following assertions hold:
 \begin{itemize}
 \item[$(a)$] At most  
\begin{equation*}
  \mathcal{O}( \ln( \varepsilon^{-1} )) = \dfrac{ \ln(\varepsilon^{-1}) + \ln(\lVert x^{0} - \overline{x} \rVert)}{\ln \left( \sqrt{1 + \kappa \beta^{\prime} \gamma + \frac{\kappa^{2} \beta^{\prime} \gamma}{2 - \kappa} } \right)}. 
 \end{equation*}
 iterations are needed to satisfy $\lVert x^{k}-\overline{x}\rVert \leq \varepsilon$.

 \item[$(b)$] At most  
 \begin{equation*}
   \mathcal{O} (\ln( \varepsilon^{-1} )) = \dfrac{\ln \left(\varepsilon^{-1} \right) + \ln \left( \dfrac{\lVert x^{0} - \overline{x} \rVert ^2}{{2 \kappa \beta^{\prime}}} \right)}{\ln \left(1 + \kappa \beta^{\prime} \gamma + \frac{\kappa^{2} \beta^{\prime} \gamma}{2 - \kappa} \right)} + 1. 
 \end{equation*}
 iterations are needed to satisfy $h(x^{k})- h(\overline{x}) \leq \varepsilon$.
 \end{itemize}
\end{proposition}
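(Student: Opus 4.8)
The plan is to extract both estimates directly from the geometric contraction already obtained inside the proof of Theorem~\ref{usual:prox}, followed by an elementary logarithmic inversion. Write $\tau := \bigl(1 + \kappa\beta'\gamma + \tfrac{\kappa^{2}\beta'\gamma}{2-\kappa}\bigr)^{-1/2}$ for the rate appearing in \eqref{conv:rate}, so that $\tau\in\,]0,1[$ and $\ln(1/\tau) = \ln\sqrt{1 + \kappa\beta'\gamma + \tfrac{\kappa^{2}\beta'\gamma}{2-\kappa}} > 0$. Inequality \eqref{conv:rate1}, which holds for every $k\in\mathbb{N}_{0}$ under \eqref{param:cond1}, says exactly $\lVert x^{k+1}-\overline{x}\rVert \le \tau\,\lVert x^{k}-\overline{x}\rVert$; iterating, $\lVert x^{k}-\overline{x}\rVert \le \tau^{k}\,\lVert x^{0}-\overline{x}\rVert$ for all $k$. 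In particular $\{\lVert x^{k}-\overline{x}\rVert\}_{k}$ is nonincreasing, which I will reuse in part~(b).

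For part~(a) it then suffices to impose $\tau^{k}\lVert x^{0}-\overline{x}\rVert \le \varepsilon$. Taking logarithms and dividing by $\ln\tau < 0$ (hence reversing the inequality), this is equivalent to $k \ge \bigl(\ln(\varepsilon^{-1}) + \ln\lVert x^{0}-\overline{x}\rVert\bigr)/\ln(1/\tau)$, which is precisely the announced count. The only point needing attention here is keeping track of the sign when dividing by $\ln\tau$.

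For part~(b) I first record a one-step functional bound. Apply Proposition~\ref{sconvex} with $z=x^{k-1}$, $x^{*}=x^{k}$, $\beta=\beta_{k-1}$, and rewrite the cross term $\langle x^{k}-x^{k-1},\overline{x}-x^{k}\rangle$ via the three-point identity \eqref{3:points}; discarding the two nonnegative quantities $\tfrac{1}{2\kappa\beta_{k-1}}\lVert x^{k}-x^{k-1}\rVert^{2}$ and $\tfrac{\gamma}{2}\lVert x^{k}-\overline{x}\rVert^{2}$ leaves
\[
 h(x^{k}) - h(\overline{x}) \;\le\; \frac{1}{2\kappa\beta_{k-1}}\bigl(\lVert x^{k-1}-\overline{x}\rVert^{2} - \lVert x^{k}-\overline{x}\rVert^{2}\bigr) \;\le\; \frac{1}{2\kappa\beta'}\,\lVert x^{k-1}-\overline{x}\rVert^{2},
\]
where in the last step the bracket is nonnegative by the monotonicity noted above, so replacing $\beta_{k-1}$ by $\beta'\le\beta_{k-1}$ only enlarges the bound, after which $-\lVert x^{k}-\overline{x}\rVert^{2}\le 0$ is dropped. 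Plugging in the geometric decay from part~(a) and using $\tau^{2(k-1)}\le\tau^{k-1}$ gives $h(x^{k})-h(\overline{x}) \le \tfrac{\tau^{k-1}}{2\kappa\beta'}\lVert x^{0}-\overline{x}\rVert^{2}$; requiring this to be at most $\varepsilon$ and inverting the logarithm as in part~(a) produces the stated bound, with the ``$+1$'' coming from the index shift between the functional value $h(x^{k})$ and the distance $\lVert x^{k-1}-\overline{x}\rVert$ controlling it.

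The logarithmic inversions are routine; the only genuinely substantive ingredient is the one-step functional estimate, and even that is essentially a repackaging of Proposition~\ref{sconvex} together with the algebra already performed in the proof of Theorem~\ref{usual:prox}. The main thing to get right is to discard exactly the nonnegative terms that still leave a right-hand side controlled geometrically by part~(a); keeping, for instance, the $\lVert x^{k}-x^{k-1}\rVert^{2}$ term would give a sharper but less convenient recursion.
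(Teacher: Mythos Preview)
Your argument is correct and matches the paper's approach: part~(a) is the same geometric decay plus logarithmic inversion, and for~(b) the paper also obtains the one-step bound $h(x^{k})-h(\overline{x}) \le \tfrac{1}{2\kappa\beta'}\lVert x^{k-1}-\overline{x}\rVert^{2}$ (directly from \eqref{lin:conv}, which is exactly your Proposition~\ref{sconvex}${}+{}$\eqref{3:points} computation repackaged) and then inserts the decay from~(a). Your explicit step $\tau^{2(k-1)}\le\tau^{k-1}$ makes transparent a weakening the paper performs silently; the residual mismatch between your $\lVert x^{0}-\overline{x}\rVert^{2}$ and the $\lVert x^{0}-\overline{x}\rVert$ in the stated bound is a typo in the proposition, not a defect in your argument.
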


\begin{proof} 
$(a)$ It follows from Theorem \ref{usual:prox} that 
 \begin{equation}\label{conve:linear}
\lVert x^k-\overline{x}\rVert \leq \mu^{k} \lVert x^{0} - \overline{x} \rVert,
\qquad
\mu := \frac{1}{ \sqrt{1 + \kappa \beta^{\prime} \gamma + \frac{ \kappa^{2} \beta^{\prime} \gamma}{2 - \kappa}} } \in\,]0,1[.
\end{equation}
Therefore, $\lVert x^{k} - \overline{x} \rVert \leq \varepsilon$ is guaranteed whenever $
\mu^k \lVert x^{0} - \overline{x} \rVert \leq \varepsilon$,
that is,
\[
k \geq \dfrac{\ln(\varepsilon^{-1}) + \ln(\lVert x^{0} - \overline{x} \rVert)}{\ln \left(\sqrt{1 + \kappa \beta^{\prime} \gamma + \frac{\kappa^{2} \beta^{\prime} \gamma}{2 - \kappa} }\right)}.
\] 
Thus,  $\lVert x^{k} - \overline{x} \rVert \leq \varepsilon$ is guaranteed within $\mathcal{O}( \ln( \varepsilon^{-1} ))$ iterations..

\vspace{0.5cm}
\noindent $(b)$ From \eqref{lin:conv} and \eqref{conve:linear}, it follows that, for every $k \geq 1$,
\begin{equation*}
  h(x^{k}) - h(\overline{x})\leq \dfrac{1}{2\kappa \beta_{k-1}}\lVert x^{k-1}-\overline{x}\rVert ^2 \leq \dfrac{1}{2\kappa \beta^{\prime}}\mu ^{2(k-1)}\lVert x^{0}-\overline{x}\rVert ^2
 \end{equation*}
 and the result follows by solving $\left(\frac{1}{1 + \kappa \beta^{\prime} \gamma + \frac{\kappa^{2} \beta^{\prime} \gamma}{2 - \kappa}}  \right)^{k-1} \dfrac{1}{2 \kappa \beta^{\prime}} \lVert x^{0} - \overline{x} \lVert^2 \leq \varepsilon$. 
 
\end{proof}

\begin{corollary}\label{coro:starconvex1}
 Under the assumptions of Proposition \ref{prop:image}. If in particular the function $h$ is strongly star-convex ($\kappa = 1$, $\gamma > 0$), then: 
\begin{itemize}
  \item[$(a)$] At most $\mathcal{O}( \ln( \varepsilon^{-1})) = \dfrac{ \ln(\varepsilon^{-1}) + \ln(\lVert x^{0} - \overline{x} \rVert)}{\ln \left(1 + 2 \beta^{\prime} \gamma \right)}$, iterations are needed to satisfy $\lVert x^{k}-\overline{x}\rVert \leq \varepsilon$.

 \item[$(b)$] At most $\mathcal{O}( \ln( \varepsilon^{-1})) = \dfrac{\ln \left(\varepsilon^{-1} \right) + \ln\left( \dfrac{\lVert x^{k} - \overline{x} \lVert^2}{{2  \beta^{\prime}}}\right)}{\ln \left(1 + 2\beta^{\prime} \gamma \right)}+1$, iterations are needed to satisfy $h(x^{k})- h(\overline{x}) \leq \varepsilon$.
 \end{itemize}
\end{corollary}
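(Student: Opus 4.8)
The plan is to obtain Corollary~\ref{coro:starconvex1} as the immediate specialization of Proposition~\ref{prop:image} to the strongly star-convex case $\kappa = 1$, $\gamma > 0$, so that no new analysis is required. First I would record that at $\kappa = 1$ the quantity governing the linear rate simplifies, since $\kappa\beta'\gamma + \frac{\kappa^{2}\beta'\gamma}{2-\kappa} = \beta'\gamma + \beta'\gamma = 2\beta'\gamma$; consequently, iterating \eqref{conv:rate1} (equivalently, invoking Corollary~\ref{coro:starconvex}) gives $\lVert x^{k} - \overline{x}\rVert \le (1 + 2\beta'\gamma)^{-k/2}\,\lVert x^{0} - \overline{x}\rVert$ for all $k \in \mathbb{N}_{0}$, while the quadratic growth \eqref{qwc:sconvex} and \eqref{lin:conv} still apply verbatim with $\kappa = 1$.

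For part~$(a)$ I would then impose $(1 + 2\beta'\gamma)^{-k/2}\,\lVert x^{0} - \overline{x}\rVert \le \varepsilon$, take logarithms, and solve the resulting inequality for $k$, exactly as in the proof of Proposition~\ref{prop:image}$(a)$; this produces a count of order $\mathcal{O}(\ln(\varepsilon^{-1}))$ with the displayed constant. For part~$(b)$ I would start from the functional-value estimate obtained by setting $\kappa = 1$ in the chain of inequalities that follows \eqref{lin:conv} in the proof of Proposition~\ref{prop:image}$(b)$, namely $h(x^{k}) - h(\overline{x}) \le (1 + 2\beta'\gamma)^{-(k-1)/2}\,\tfrac{1}{2\beta'}\,\lVert x^{0} - \overline{x}\rVert$, require the right-hand side to be at most $\varepsilon$, and again solve for $k$ via logarithms; the extra term $+1$ in the count accounts for the index shift $k \mapsto k-1$ in this estimate.

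There is no genuine obstacle here: every ingredient is already established in Theorem~\ref{usual:prox} and Proposition~\ref{prop:image}, so the argument is purely the substitution $\kappa = 1$, the elementary simplification of $\kappa\beta'\gamma + \frac{\kappa^{2}\beta'\gamma}{2-\kappa}$ at $\kappa = 1$ to $2\beta'\gamma$, and the monotonicity of $\ln$. The only point demanding a little care is the bookkeeping of the logarithm of the square-root contraction factor $(1+2\beta'\gamma)^{-1/2}$ when writing out the final explicit iteration counts.
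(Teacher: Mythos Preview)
Your proposal is correct and matches the paper's approach exactly: the paper states Corollary~\ref{coro:starconvex1} with no proof, treating it as the immediate specialization of Proposition~\ref{prop:image} at $\kappa = 1$, which is precisely the substitution and simplification $\kappa\beta'\gamma + \tfrac{\kappa^{2}\beta'\gamma}{2-\kappa} = 2\beta'\gamma$ you describe. Your closing remark about the square-root bookkeeping is apt, since the denominator obtained from a literal substitution into Proposition~\ref{prop:image} is $\ln\sqrt{1+2\beta'\gamma}$ rather than $\ln(1+2\beta'\gamma)$.
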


Now, we analyze the general quasar-convex case ($\gamma = 0$).

\begin{proposition}\label{prop:reqexe} Under the assumptions of Theorem \ref{case gamma=0}, let $\overline{x} \in {\rm argmin}_{K}\,h$, and suppose that the sequence $\{x^k\}_{k}$ is generated by Algorithm~\ref{ppa:sqcx}, with step sizes satisfying
\begin{equation}\label{param:cond2}
 0 < \beta^{\prime} \leq \beta_{k} \leq \beta^{\prime \prime}, ~ \forall ~ k \in \mathbb{N}_0.
\end{equation}
Then, given a tolerance $\varepsilon > 0$, the following assertions hold:
\begin{itemize}
 \item[$(a)$] At most  
 \begin{equation}\label{comp:bound}
 \frac{2 \beta^{\prime \prime} \big(h(x^0) - \min_{x \in K} h(x)\big)}{\varepsilon^{2}},
 \end{equation}
 iterations are needed to satisfy the stopping criterion $\Vert x^{k+1} - x^k \Vert \leq \varepsilon$.

 \item[$(b)$] At most  
 \begin{equation}\label{comp:bound1}
 \frac{\lVert x^0 - \overline{x}\rVert ^2}{ 2\kappa \beta^{\prime} \varepsilon},
 \end{equation}
 iterations are needed to satisfy the stopping criterion $h(x^k)-h(\overline{x}) \leq \varepsilon$.
\end{itemize}
\end{proposition}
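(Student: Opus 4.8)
The plan is to extract two complementary one-step ``potential decrease'' inequalities already available in the excerpt and telescope each of them; neither part requires any new idea beyond bookkeeping.

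For part $(a)$, I would start from the descent inequality \eqref{tel:one}, which rearranges to $\frac{1}{2\beta_k}\lVert x^{k+1}-x^k\rVert^2\leq h(x^k)-h(x^{k+1})$ for every $k$. Using the upper step-size bound $\beta_k\leq\beta''$ from \eqref{param:cond2} and summing over $k=0,\dots,N-1$ telescopes the right-hand side to $h(x^0)-h(x^N)\leq h(x^0)-\min_{x\in\mathbb{R}^n}h(x)$ (the minimum is attained, since $\kappa$-quasar-convexity presupposes ${\rm argmin}_{\mathbb{R}^n}\,h\neq\emptyset$). Hence
\[
\min_{0\leq k\leq N-1}\lVert x^{k+1}-x^k\rVert^2\ \leq\ \frac{1}{N}\sum_{k=0}^{N-1}\lVert x^{k+1}-x^k\rVert^2\ \leq\ \frac{2\beta''\bigl(h(x^0)-\min_{x\in\mathbb{R}^n}h(x)\bigr)}{N},
\]
so taking $N\geq 2\beta''(h(x^0)-\min h)/\varepsilon^2$ forces $\lVert x^{k+1}-x^k\rVert\leq\varepsilon$ for at least one index $k<N$, i.e. the stopping test is triggered within $\mathcal{O}(\varepsilon^{-2})$ iterations.

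For part $(b)$, I would instead telescope the Fej\'er-type estimate \eqref{fejer} established in the proof of Theorem~\ref{case gamma=0}, which reads $\lVert x^{k+1}-\overline{x}\rVert^2\leq\lVert x^k-\overline{x}\rVert^2+2\kappa\beta'\bigl(h(\overline{x})-h(x^{k+1})\bigr)$ and thus $2\kappa\beta'\bigl(h(x^{k+1})-h(\overline{x})\bigr)\leq\lVert x^k-\overline{x}\rVert^2-\lVert x^{k+1}-\overline{x}\rVert^2$. Summing over $k=0,\dots,N-1$ bounds $\sum_{k=0}^{N-1}\bigl(h(x^{k+1})-h(\overline{x})\bigr)$ by $\lVert x^0-\overline{x}\rVert^2/(2\kappa\beta')$; then, since $\{h(x^k)\}_k$ is nonincreasing by \eqref{tel:one}, the terminal term dominates the average, so that $N\bigl(h(x^N)-h(\overline{x})\bigr)\leq\sum_{k=0}^{N-1}\bigl(h(x^{k+1})-h(\overline{x})\bigr)$. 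Combining gives $h(x^N)-h(\overline{x})\leq\lVert x^0-\overline{x}\rVert^2/(2\kappa\beta' N)$, and demanding that this be $\leq\varepsilon$ produces the claimed $\mathcal{O}(\varepsilon^{-1})$ bound.

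The only points that need care --- and the closest thing to an ``obstacle'' --- are choosing the correct side of the step-size sandwich \eqref{param:cond2} in each part (the upper bound $\beta''$ in $(a)$, because we want $\tfrac{1}{2\beta_k}\geq\tfrac{1}{2\beta''}$; the lower bound $\beta'$ in $(b)$, because $2\kappa\beta_k$ multiplies the nonnegative quantity $h(x^{k+1})-h(\overline{x})$), and, in $(b)$, converting the ``best-iterate'' sum bound into a genuine last-iterate bound --- which is legitimate precisely because the objective values decrease monotonically along $\{x^k\}_k$. Everything else is routine telescoping.
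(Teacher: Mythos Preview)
Your proposal is correct and matches the paper's proof essentially line for line: the paper dismisses $(a)$ as ``standard'' with a citation, and the argument you spell out is precisely that standard telescoping of \eqref{tel:one}; for $(b)$ the paper starts from \eqref{lin:conv} (of which \eqref{fejer} is the $\gamma=0$ specialization you invoke), sums, and then uses the monotonicity of $\{h(x^k)\}_k$ to pass from the average to the last iterate, exactly as you do. Your remarks about which side of the step-size sandwich is needed in each part are also on point.
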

\begin{proof}
 $(a)$ The proof is standard (see, for instance, \cite[Proposition 4.8]{AIKL}). 

\noindent $(b)$ Using \eqref{lin:conv}, we have
\begin{align}
 & \hspace{1.65cm} 2 \kappa\beta^{\prime} (h(x^{k+1}) - h(\overline{x})) \leq \lVert  x^k - \overline{x} \rVert^2 - \lVert x^{k+1} - \overline{x}\rVert ^2, ~ \forall ~ k \geq 0 \notag \\
 & \Longrightarrow ~ 2\kappa \beta^{\prime}  \sum_{k=0}^{N-1} \left( h(x^{k+1}) - h(\overline{x}) \right) \leq \lVert x^0 - \overline{x}\rVert ^2 - \lVert x^N - \overline{x}\rVert ^2 \leq \lVert x^0 - \overline{x}\rVert ^2. \label{sum_bound}
\end{align}
Since $\{h(x^k)\}_{k}$ is nonincreasing, 
$h(x^N) - h(\overline{x}) \leq \frac{1}{N} \sum_{k=0}^{N-1} \left( h(x^{k+1}) - h(\overline{x}) \right)$.
Combining this with \eqref{sum_bound}, we conclude
\begin{equation} \label{bound_3}
 h(x^N) - h(\overline{x}) \leq \frac{\lVert x^0 - \overline{x}\rVert ^2}{2\kappa \beta^{\prime} N},
\end{equation}
and since this holds for all $N \in \mathbb{N}$, the result \eqref{comp:bound1} follows.
\end{proof} 

\begin{remark}
 Note that our complexity rates for (strongly) quasar-convex function preserve the same order as for (strongly) convex functions (see \cite[Theorem 2.1]{Gu} and \cite[Example 23.40]{BC-2} for instance), even when several classes of nonconvex functions are including in our analysis. 
\end{remark}

\section{Numerical Experiments}\label{sec:05}

In this section, we illustrate that, as in the convex case, solving a sequence of proximal subproblems may improve the conditioning of the problem and thus its numerical behavior.

In the examples below, we consider classes of functions which are neither convex nor quasiconvex, since their sublevel sets are not convex and therefore represent hard problems for first-order algorithms.


\begin{example} {\rm {\bf (Strongly quasar-convex illustration)}} \label{exam:01}
In the first example, we consider a family of strongly quasar-convex functions of the type: $h: \mathbb{R}^{2} \rightarrow \mathbb{R}$, defined as:
\begin{align}\label{function:01}
h(x) = h_{1}(\lVert x \rVert) h_{2} \left( \frac{x}{\lVert x \rVert} \right),
\end{align}
where 
\begin{align*}
 & \hspace{0.4cm} h_{1} (\lVert (x_{1}, x_{2}) \rVert) = \max\{ q_{1} \lVert (x_{1}, x_{2}) \rVert^{2}, q_{2} \lVert  (x_{1}, x_{2})  \rVert^{2} - k\}, ~ q_{2} > q_{1} > 0, ~ k \in \mathbb{N}, \\
 & h_{2} \left( \frac{(x_{1}, x_{2})}{\lVert (x_{1}, x_{2}) \rVert} \right) = \frac{1}{4N} \sum_{i=1}^{N} \left( a_{i} \sin^{2}(b_{i} x_1) + c_{i} \cos^{2}(d_{i} x_2) \right) + 1,
\end{align*} 
with $N=10$, ${a_{i}}$ and ${c_{i}}$ independently and uniformly distributed on $[0, 20]$, while ${b_{i}}$ and ${d_{i}}$ are independently and uniformly distributed on $[-25, 25]$ for $h_{2}$.

Clearly, the minimum point of $h$ on $\mathbb{R}^{2}$ is $(x_{1}, x_{2}) = (0, 0)$, and $h$ is nonsmooth because $h_{1}$ is nonsmooth as the maximum of two functions. Furthermore, $h_{1}$ is strongly quasar-convex. Indeed, $h_{1}$ is strongly convex as the maximum of two strongly convex functions with modulus $\gamma_{1} = \min\{2 q_{1}, 2 q_{2}\} > 0$, thus $h_{1}$ is strongly quasar-convex with modulus $\kappa = 1$ and $\gamma = \gamma_{1} > 0$. Therefore, it follows from \cite[Proposition 8]{HADR} that $h = h_{1} h_{2}$ is strongly quasar-convex with the same modulus. However, note that Algorithm \ref{ppa:sqcx} does not require the exact values of $\kappa$ and $\gamma$ for being implemented.

Also we emphasize that $h$ is not strongly quasiconvex (not even quasiconvex) since its level sets are not convex, hence the PPA's from \cite{BC-2,Lara-9,rock-1976} and refe\-ren\-ces therein cannot be applied in this context. 

On the other hand, $h$ satisfies the assumptions of Theorem \ref{usual:prox}, i.e., the sequence generated by Algorithm \ref{ppa:sqcx} is minimizing and converges linearly to the optimal solution $\overline{x} = (0, 0)$.
\end{example}

\begin{example} {\rm {\bf (Quasar-convex illustration)}} \label{exam:02}
 In this second example, we consider a family of quasar-convex (not strongly, i.e., $\gamma = 0$) functions of the type: $h: \mathbb{R}^{2} \rightarrow \mathbb{R}$, defined as in \eqref{function:01},
where $h_{2}$ is given as in Example \ref{exam:01} while $h_{1}$ is given by
$$h_{1} (\lVert (x_{1}, x_{2}) \rVert) = \max\{ \lVert (x_{1}, x_{2}) \rVert, q \lVert (x_{1}, x_{2}) \rVert - k\}, ~ q > 1, ~ k \in \mathbb{N}.$$ 

Clearly, the minimum point of $h$ on $\mathbb{R}^{2}$ is $\overline{x} = (0, 0)$, a point in which $h$ is nonsmooth in virtue of $h_{1}$. Furthermore, since $h_{1}$ is convex, it is quasar-convex with modulus $\kappa = 1$ and $\gamma = 0$, thus using \cite[Proposition 8]{HADR} we get that $h$ is quasar-convex, too. An illustration of function $h$ is given in Figure \ref{fig:02} below. 
\begin{figure}[htbp]
\centering 
\includegraphics[width=1.02\linewidth]{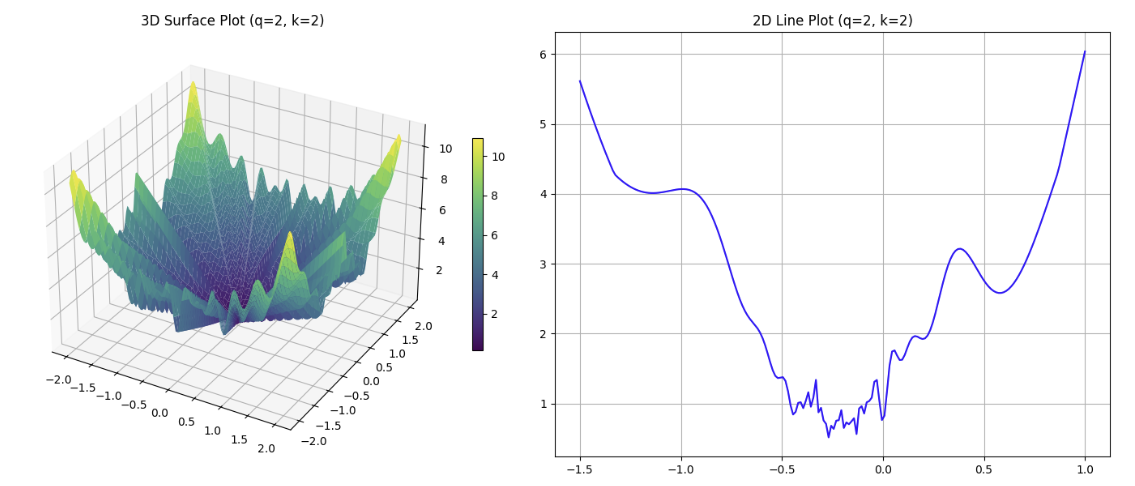} 
\caption{An illustration of the quasar-convex function $h(x)$ described in Exam\-ple \ref{exam:02} with $q=2$ and $k=2$. A 3D plot of the function $h(x)$ (left) and an arbitrary segment that does not contain the minimizer (right).
} \label{fig:02}
\end{figure}

As in the previous example, $h$ is neither convex nor strongly quasiconvex (not even quasiconvex) since its level sets are not convex (see Figure \ref{fig:02}), hence the PPA's from \cite{BC-2,Lara-9,rock-1976} and references therein cannot be applied.

On the other hand, $h$ satisfies the assumptions of  Theorem \ref{case gamma=0}, i.e., the sequence generated by Algorithm \ref{ppa:sqcx} is minimizing and converges to the optimal solution $\overline{x} = (0, 0)$.
\end{example}

\subsection{Implementations}
To solve the above two examples in this section via PPA, we cannot use the common smooth first-order method or second-order method for the subs\-pro\-blems since both subproblems involved in Examples \ref{exam:01} and \ref{exam:02} are nonsmooth. Therefore, we use the Clarke necessary optimality condition \cite[Proposition 2.4.11]{Clarke} on the subproblem in PPA which is (use also \cite[Corollary 1, page 39]{Clarke})
\begin{equation}\label{eq.subproblem_PPM}
 \overline{x} \in {\rm Prox}_{\beta_k h} (z) ~ \Longrightarrow ~ \frac{1}{\beta_{k}} (z - \overline{x}) \in \partial^{C} h(\overline{x}),
\end{equation}
where 
$\partial^{C}$ is the Clarke subdifferential \cite{Clarke}. Furthermore, since a piecewise smooth function is se\-mi\-smooth \cite[Proposition 2.26]{Ulbrich:2011} and the product of se\-mi\-smooth functions is still semismooth \cite[Proposition 1.75]{Solodov:2014}, we utilize the semismooth Newton method \cite{Qi:1993} for solving the subproblems \eqref{step:sqcx} in Algorithm \ref{ppa:sqcx}. Moreover, note that in the subproblems the product rule \cite[Proposition 2.3.13]{Clarke} holds as equality since the functions $h_{1}$ and $h_{2}$ (in both examples) are {\it regular} in the sense of \cite[Definition 2.3.4]{Clarke} (see \cite[Proposition 2.3.6]{Clarke}).



We compare Algorithm \ref{ppa:sqcx} with the semismooth Newton method in \cite{Qi:1993}, referred to as \textbf{PPA} and \textbf{SSN}, respectively. 
Our comparison primarily focuses on the optimal function values attained by the two methods. 
This choice is motivated by the fact that the semismooth Newton method is employed as a subroutine within PPA, so that both methods share a common computational core, making direct comparisons in terms of iteration counts and computational time less informative.

Nevertheless, for completeness, we also report the wall-clock time. The numerical results show that \textbf{PPA} exhibits significantly better robustness and convergence behavior than \textbf{SSN}. 
On the other hand, when both methods successfully converge, \textbf{PPA} typically requires more computational time, reflecting the additional cost of solving subproblems at each iteration.


The computational experiments were conducted on a machine equipped with an Intel Core i7-12700H CPU at 2.30GHz, 32 GB of RAM, and running Windows 10 Professional. All implementations were developed in MATLAB R2015b. 
The following conditions were used:
\begin{itemize}
    \item[(i)] We consider $N=2,5,10,20$ and for each value of $N$ we test {\it 50 instances
    } generated with different random parameters as described in Examples \ref{exam:01} and \ref{exam:02}. 
    \item[(ii)] The stopping criteria for PPA and SSN methods are
    \begin{align*}\label{}
     & \lVert x^{k+1} - x^k \rVert \leq \varepsilon ~~ {\rm or} ~~ k > 30000, \\
     & ~~ \lVert \partial^C h(x^k) \rVert \leq \varepsilon ~~ {\rm or} ~~ k > 30000,
    \end{align*}
    where $\varepsilon = 10^{-9}$, respectively.
    
    \item[(iii)] The inner loop of the SSN for solving the subproblem in PPA will stop if
    \begin{equation}
        \label{}
        \left \| \partial^C h(x^k) + \frac{1}{\beta_k} (x^k-x^{k-1}) \right \| \leq \varepsilon^{\prime} ~~ {\rm or}~~k>10000,
    \end{equation}
    where $\varepsilon^{\prime} < 10^{-7}$, $\beta_k = 0.05$ and $c=1$.
\end{itemize}

The obtained results are listed in the following two tables. 


\begin{table}[htbp]
\centering
\small %
\caption{Successful numbers (if the final function value is less than $10^{-6}$) and median of optimal values for PPA and SSN (Example \ref{exam:01}).}
\label{tab.1}
\begin{tabularx}{\textwidth}{l *{4}{>{\centering\arraybackslash}X}} %
\toprule
Method & \(N=2\) & \(N=5\) & \(N=10\) & \(N=20\) \\
\midrule
SSN & 19 / 1.34e-6 & 25 / 1.01e-6 & 31 / 7.41e-7 & 35 / 5.34e-7 \\
PPA    & 50 / 3.47e-10 & 50 / 3.86e-10 & 50 / 3.43e-10 & 50 / 3.13e-10 \\
\bottomrule
\end{tabularx}
\end{table}

\begin{table}[htbp]
\centering
\small %
\caption{Successful numbers (if the final function value is less than $10^{-3}$) and median of optimal values for PPA and SSN (Example \ref{exam:02}). We recall that \texttt{NaN} means {\it not a number}.}
\label{tab.2}
\begin{tabularx}{\textwidth}{l *{4}{>{\centering\arraybackslash}X}} %
\toprule
Method & \(N=2\) & \(N=5\) & \(N=10\) & \(N=20\) \\
\midrule
SSN & 5 / NaN & 2 / NaN & 8 / NaN & 4 / NaN \\
PPA & 49 / 4.63e-6 & 50 / 4.27e-6 & 50 / 3.57e-6 & 50 / 3.71e-6 \\
\bottomrule
\end{tabularx}
\end{table}
\begin{remark}
In Table \ref{tab.2}, we choose a less restrictive comparison standard for Exam\-ple \ref{exam:02} since the SSN hardly converges for most of the samples. \texttt{NaN} indi\-ca\-tes that the median cannot be computed since most of the runs did not converge.   
\end{remark}

\begin{table}[htbp]
\centering
\small %
\caption{Wall-clock time (s) of \textbf{PPA} and \textbf{SSN} for Example \ref{exam:01} (successful cases only).}
\label{tab.3}
\begin{tabularx}{\textwidth}{l *{4}{>{\centering\arraybackslash}X}} %
\toprule
Method & \(N=2\) & \(N=5\) & \(N=10\) & \(N=20\) \\
\midrule
SSN & 0.4784 & 0.5375 & 0.5319 & 0.5288 \\
PPA &  15.94 & 18.46 & 15.99 & 12.28 \\
\bottomrule
\end{tabularx}
\end{table}

\begin{table}[htbp]
\centering
\small %
\caption{Empirical linear contraction ratios $R_i$ and $R_f$ for PPA, reported at iterations k = 1, 2, 3, 4, 5, 6, 7 and 8. The values remain uniformly below 1, corroborating linear convergence of PPA for Example \ref{exam:01}.}
\label{tab.4}
\begin{tabularx}{\textwidth}{l *{8}{>{\centering\arraybackslash}X}} %
\toprule
Iteration & \(1\) & \(2\) & \(3\) & \(4\) & \(5\) & \(6\) & \(7\)& \(8\) \\
\midrule
$R_i$ & 0.1535 & 0.1721 & 0.2758 & 0.3713 & 0.3720 & 0.3720 & 0.3720 & 0.3720 \\
$R_f$ &  0.0332 & 0.0272 & 0.0290 & 0.1487 & 0.1384 & 0.1384 & 0.1384 & 0.1384 \\
\bottomrule
\end{tabularx}
\end{table}

\begin{figure}[h!]
    \centering
    \begin{subfigure}[b]{0.45\textwidth}
        \centering
        \includegraphics[width=\textwidth]{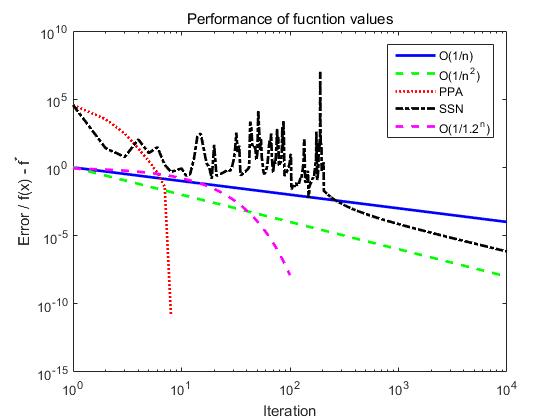}
        \caption{Function value sequence}
        \label{fig:ex27_fvalue}
    \end{subfigure}
    \hfill
    \begin{subfigure}[b]{0.45\textwidth}
        \centering
        \includegraphics[width=\textwidth]{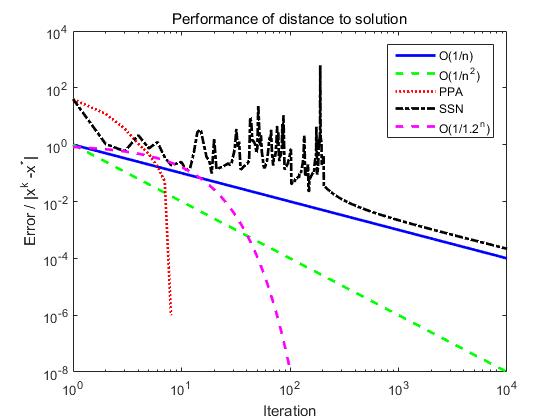}
        \caption{Norm of iterates}
        \label{fig:ex27_xknorm}
    \end{subfigure}

    \caption{Convergence of function value sequence and the norm of iterates over iterations for Example~\ref{exam:01}.}
    \label{fig:ex27_convergence}
\end{figure}

\begin{figure}[htbp]
    \centering
    \begin{subfigure}[b]{0.45\textwidth}
        \centering
        \includegraphics[width=\textwidth]{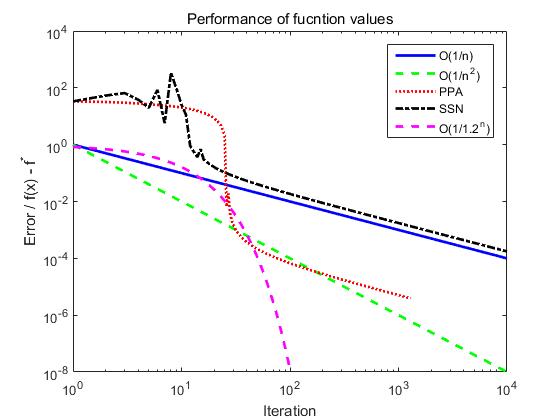}
        \caption{Function value sequence}
        \label{fig:ex28_fvalue}
    \end{subfigure}
    \hfill
    \begin{subfigure}[b]{0.45\textwidth}
        \centering
        \includegraphics[width=\textwidth]{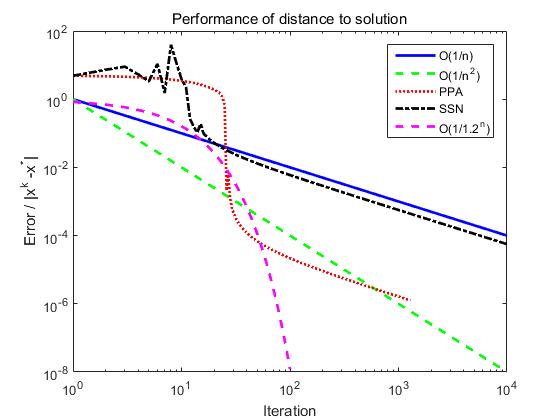}
        \caption{Norm of iterates}
        \label{fig:ex28_xknorm}
    \end{subfigure}

    \caption{Convergence of function value sequence and the norm of iterates over iterations for Example~\ref{exam:02}.}
    \label{fig:ex28_convergence}
\end{figure}
\begin{remark}
In several examples, we observe that SSN converges with os\-ci\-lla\-to\-ry behavior while PPA does not (see, for instance, Figures \ref{fig:ex27_convergence} and \ref{fig:ex28_convergence} which correspond to one instance selected from our experiments of Examples \ref{exam:01} and \ref{exam:02}, respectively).
    
\end{remark}

According to the numerical experiments, we conclude the following:
\begin{enumerate}
    \item In Example \ref{exam:01}, PPA and SSN converges in all instances but PPA achieves higher accuracy (Table \ref{tab.1}), while in Example \ref{exam:02} only PPA converges since SSN diverges in many cases (Table \ref{tab.2}). Confirming the advantages of PPA by solving subproblems instead of attacking directly the original problem. 

    \item PPA exhibits linear convergence in the tested instances of Example \ref{exam:01} for the strongly quasar-convex function (see Figure \ref{fig:ex27_convergence}) while for a general quasar-convex function (i.e., not strongly), PPA converges only sublinearly (see Figure \ref{fig:ex28_convergence}), as expected from our theoretical results of Section \ref{sec:04}. 

    \item For completeness, we report the wall-clock time of \textbf{PPA} and \textbf{SSN} for selected instances in Example \ref{exam:01}. This comparison is restricted to these instances, since \textbf{SSN} fails to converge for many cases in Example \ref{exam:02}, as indicated in Table \ref{tab.2}. When both methods successfully converge, Table \ref{tab.3} shows that \textbf{PPA} is considerably more time-consuming than \textbf{SSN}. This is of course expected, as each iteration of \textbf{PPA} requires solving a subproblem via the semismooth Newton method, leading to a higher overall computational cost despite its improved robustness.
        
    

    \item Table \ref{tab.4} reports the empirical ratios
    \[
        R_f(k) =:\frac{f_{k+1}}{f_k},~~~~R_i(k) = \frac{\|x^{k+1} - x^*\|}{\|x^k - x^*\|},
    \]
    generated by PPA for Example \ref{exam:01}. All ratios stabilize strictly below one, providing clear numerical evidence of linear convergence of the function values and iterates, in agreement with the theoretical predictions for strongly quasar-convex function.
\end{enumerate}

\section{Conclusions}\label{sec:06}

We contributed to the extension of the remarkable properties of PPA from (strongly) convex functions to a broader class of functions, known as (strongly) quasar-convex. By generalizing the standard properties of proximity operators, we prove that PPA generates a minimizing sequence for quasar-convex functions and converges linearly in the strongly quasar-convex case, achieving a rate matching that of the strongly convex case.  Additionally, we present several noteworthy examples of nonsmooth (strongly) quasar-convex functions, which have practical applications in machine learning, microeconomic theory, and beyond.

Our developments open the door to new research directions, as algorithms for nonsmooth quasar-convex optimization remain largely unexplored. A natural next step would be to extend these results to proximal-gradient (splitting) methods \cite{ABS} and high-order PPA variants \cite{AIKL}. We leave these promising extensions for future work.



\section{Declarations}\label{sec:06}







\subsection{Availability of Supporting Data}

No datasets were generated during the present study. The Matlab code for the numerical experiments is available upon reasonable request from all authors.

%

\subsection{Competing Interests}

There are no conflicts of interest or competing interests related to this manuscript.

\subsection{Funding}

This research was partially supported by ANID--Chile under project Fondecyt Regular 1241040 (Lara) and Conselho Nacional de Desenvolvimento Cient\'ifico e Tecnol\'ogico CNPq 153172/2024-0 (Liu).


\end{document}